\begin{document}
	
	\title{The augmented weak sharpness of solution sets in equilibrium problems
		\thanks{This research was supported by the National Natural Science Foundations of China (12371305), the Natural Science Foundation of Shandong Province (ZR2021MA066, ZR2023MA020), the Natural Sciences and Engineering Research Council of Canada (RGPIN-2018-05687), and a centennial fund of University of Alberta.}}
	
	\titlerunning{The augmented weak sharpness of solution sets in equilibrium problems}        

	\author{Ruyu Wang\and  Wenling Zhao\and Daojin Song \and Yaozhong Hu  
	}
	
	
	\institute{Ruyu Wang\at
		School of Mathematics and Statistics, Shandong University of Technology \\
		\email{13121204020@stumail.sdut.edu.cn}          \\
	     \and
	    Wenling Zhao (Corresponding author)\at
	     School of Mathematics and Statistics, Shandong University of Technology \\
	     \email{wenlingzhao@sdut.edu.cn}          \\
	     \and
	     Daojin Song\at
	     School of Mathematics and Statistics, Shandong University of Technology \\
	     \email{djs@sdut.edu.cn}          \\
	     \and
	Yaozhong Hu \at
	Faculty of Science - Mathematics \& Statistical Sciences, University of Alberta\\
	\email{yaozhong@ualberta.ca}
	}
	
	\date{Received: date / Accepted: date}

	\maketitle
	
	\begin{abstract}
	This study delves into equilibrium problems, focusing on the identification of finite solutions for feasible solution sequences. We introduce an innovative extension of the weak sharp minimum concept from convex programming to equilibrium problems, coining this as weak sharpness for solution sets. Recognizing situations where the solution set may not exhibit weak sharpness, we propose an augmented mapping approach to mitigate this limitation. The core of our research is the formulation of augmented weak sharpness for the solution set, a comprehensive concept that encapsulates both weak sharpness and strong non-degeneracy within feasible solution sequences. Crucially, we identify a necessary and sufficient condition for the finite termination of these sequences under the premise of augmented weak sharpness for the solution set in equilibrium problems. This condition significantly broadens the scope of existing literature, which often assumes the solution set to be weakly sharp or strongly non-degenerate, especially in the context of mathematical programming and variational inequality problems. Our findings not only shed light on the termination conditions in equilibrium problems but also introduce a less stringent sufficient condition for the finite termination of various optimization algorithms. This research, therefore, makes a substantial contribution to the field by enhancing our understanding of termination conditions in equilibrium problems and expanding the applicability of established theories to a wider range of optimization scenarios.
		\keywords{ Equilibrium problems\and Feasible solution sequence\and Regular normal cone\and Weak sharpness\and Augmented weak sharpness\and Finite termination}
	\end{abstract}

\section{Introduction}\label{sec1}

In this paper, we explore the equilibrium problem denoted as $EP(\phi, S)$:
\begin{eqnarray*}
	\text{Find}\quad \bar{x}\in S\quad \text{such~that}\quad \phi(\bar{x},y)\geq 0,\quad \forall y\in S,
\end{eqnarray*}
where $S\subset \mathbb{R}^{n}$ is a closed convex set, and
$\phi:\mathbb{R}^{n}\times \mathbb{R}^{n}\rightarrow \mathbb{R}$ is a function such that
\begin{eqnarray}\label{eq1.1}
	\phi(x,x)=0,\quad \forall x\in S.
\end{eqnarray}
Let 
$$\bar{S}=\{x\in S\mid \phi(x,y)\geq0,\quad\forall y \in
S\}\neq\emptyset$$
be the solution set of $EP(\phi,S)$ and
$$\tilde{S}=\{x\in S\mid \exists u\in\partial_{y}\phi(x,x),\quad\text{such~that}\quad\left\langle u,y-x\right\rangle \geq0,~\forall y\in S\}$$
is the stationary points set of $EP(\phi,S)$, where $\partial_{y}\phi(x,x)$ is generalized sub-differential at $x$ of $\phi(x,\cdot)$ ([\cite{rockafellar1998variational}. Definition 8.3]), or sub-differential for short.

The relation between $\bar{S}$ and $\tilde{S}$ is discussed, where it is noted that while the general relation 
\begin{eqnarray}\label{eq1.2}
	\bar{S}\subseteq \tilde{S}
\end{eqnarray}
does not always hold, under certain conditions, such as lower semi-continuity of $\phi(x,\cdot)$ and the satisfaction of $(BCQ)$ constraint qualification in $\bar{S}$, this inclusion is established. This condition is particularly valid when $\phi(x,\cdot)$ is locally Lipschitzian or convex on $\mathbb{R}^n$.

The model $EP(\phi,S)$ serves as a unified model encompassing various optimization problems, including mathematical programming, variational inequality, Nash equilibrium, and saddle point problems, for example, \cite{bigi2013existence,blum1994optimization,combettes2005equilibrium,fan1972minimax}. The study extends to vector optimization problems, demonstrating the versatility of $EP(\phi, S)$. Previous research efforts have expanded the model to include generalized quasi-variational inequality problems. The concept of equilibrium problem plays a central role in various applied sciences, such as physics, economics, engineering, transportation, sociology, chemistry, biology, and other fields \cite{flaam1996equilibrium,konnov2007equilibrium,moudafi2007finite}. The theory of gap functions, developed in the variational inequalities, is extended to a general equilibrium problem in \cite{mastroeni2003gap}. Van et al. \cite{van2020linear} provide sufficient conditions and characterizations for linearly conditioned bifunction associated with an equilibrium problem.

Regarding the finite termination of algorithms for $EP(\phi,S)$, particularly in mathematical programming and variational inequality problems, existing research has predominantly concentrated on concepts such as weak sharp minimum and strong non-degeneracy of the solution set. Pioneering studies by scholars like Rockafellar \cite{rockafellar1976monotone}, Polyak \cite{polyak1987introduction}, Ferris \cite{ferris1991finite}, and others have laid down conditions for finite termination utilizing specific algorithms. Nonetheless, the reliance on algorithmic frameworks has highlighted the necessity for more expansive research into conditions that ensure finite termination, irrespective of the algorithms employed. Early significant contributions in this area were made by Burke and Mor$\acute{e}$ \cite{burke1988identification}, who established necessary and sufficient conditions for the finite termination of feasible solution sequences in smooth programming problems that converge to strongly non-degenerate points. Subsequently, Burke and Ferris \cite{burke1991characterization} broaden these findings to encompass differentiable convex programming. In a further extension, Marcotte and Zhu \cite{marcotte1998weak} generalize these principles to continuous variational inequality problems characterized by pseudo-monotonicity$^+$. Al-Homidan et al. \cite{al2017weak} consider weak sharp solutions for the generalized variational inequality problem, in which the underlying mapping is set-valued. Huang et al. \cite{huang2018weak} give several characterizations of the weak sharpness in terms of the primal gap function associated with the mixed variational inequality. Nguyen \cite{nguyen2021weak} presents the concept of weak sharpness in variational inequality problems, specifically within the context of Hadamard spaces. Subsequently, numerous researchers have explored the concept of weak sharpness of the solution set, particularly focusing on its implications for the finite convergence of diverse algorithms applied to variational inequality problems. This topic has been extensively investigated in various studies, as detailed in references \cite{al2017finite,liu2018weakly,wu2018characterizations,wu2004weak}, among others.

This paper introduces and elaborates on the concepts of weak sharpness and strong non-degeneracy of the solution set within the framework of $EP(\phi,S)$. To tackle scenarios where these characteristics are not present, we propose an augmented mapping on the solution set. This leads to the definition of augmented weak sharpness of the solution set for feasible solution sequences. This novel concept not only generalizes weak sharpness and non-degeneracy but is also employed in establishing necessary and sufficient conditions for the finite termination of feasible solution sequences under the premise of augmented weak sharpness.

The remainder of the paper is organized as follows. Section 2 provides preliminaries and discusses several special cases of $EP(\phi,S)$. In Section 3, the notion of augmented weak sharpness is introduced for the solution set of $EP(\phi,S)$ under general conditions. Section 4 presents examples illustrating situations where weak sharpness or non-degeneracy is not satisfied, but augmented weak sharpness holds. Section 5 establishes the finite identification of feasible solution sequences under the condition of augmented weak sharpness and presents consequences, generalizing results from conditions of weak sharpness or strong non-degeneracy. We make a conclusion in Section 6. 

\section{Preliminary}\label{sec2}
In this section, we introduce fundamental concepts and specific cases relevant to $EP(\phi,S)$, laying the groundwork for subsequent discussions.

Consider an infinite sequence $N \subseteq \{1, 2, \cdots\}$ and sets $C^{k} \subset \mathbb{R}^{n}$ for $k=1, 2, \cdots$. Define the upper limit and lower limit of the sequence of sets as follows:
\begin{eqnarray*}
	\limsup\limits_{k\rightarrow \infty }C^{k} &=&\{x \in \mathbb{R}^{n}\mid 
	\exists N,~x^{k}\in C^k,\quad\text{such~that}~\lim\limits_{k\in N,k\rightarrow \infty }x^{k} = x\},\\
	\liminf_{k\rightarrow \infty }C^{k} &=&\{x \in \mathbb{R}^{n}\mid 
	\exists x^{k}\in C^{k},\quad\text{such~that}~\lim\limits_{k\rightarrow \infty} x_{k}=x\}.
\end{eqnarray*}
Thus we have
\begin{eqnarray*}
	\liminf_{k\rightarrow \infty }C^{k} \subseteq\limsup\limits_{k\rightarrow \infty }C^{k}.
\end{eqnarray*}

Let $C\subset \mathbb{R}^{n},\bar{x}\in C$. The tangent cone for $C$ at $\bar{x}$ is defined as
\begin{eqnarray*}
	T_{C}(\bar{x})=\{d\in \mathbb{R}^{n}\mid \exists x^{k}\in
	C,x^{k}\rightarrow \bar{x}, \tau^{k} \downarrow 0, (k\rightarrow
	\infty),\quad\text{with}\quad\lim\limits_{k\rightarrow
		\infty}(x^{k}-\bar{x})/\tau^{k}=d\}.
\end{eqnarray*}
The regular normal cone for $C$ at $\bar{x}$ is defined as
\begin{eqnarray*}
	\hat{N}_{C}(\bar{x})=\{d\in \mathbb{R}^{n} \mid \left\langle d,x-\bar{x}\right\rangle \leq o(\|x-\bar{x}\|),x\in C\}.
\end{eqnarray*}
In general meaning, the normal cone for $C$ at $\bar{x}$ is
defined as $N_{C}(\bar{x})=\limsup\limits_{x^k\in C,x^k\rightarrow
	\bar{x}}\hat{N}_C(x^k)$. 
The polar cone of $C$ is defined as 
$C^{\circ}=\{y\in \mathbb{R}^{n}\mid \left\langle y,x\right\rangle \leq 0,\quad\forall x\in C\}$. 
By [\cite{rockafellar1998variational}, Proposition 6.5], we have
$T_{C}(x)^{\circ}=\hat{N}_C(x)$. When $C$ is convex, by [\cite{rockafellar1998variational}, Theorem 6.9], we have
\begin{eqnarray*}
	N_{C}(\bar{x})=\hat{N}_C(\bar{x})=\{d\mid \left\langle d,x-\bar{x}\right\rangle \leq 0,\quad\forall x\in C\}.
\end{eqnarray*}
The projection of a point $x\in \mathbb{R}^n$ over a closed set $C$
is defined as
\begin{eqnarray*}
	P_C(x)=\arg\min\limits_{y\in C}\|y-x\|,
\end{eqnarray*}
while the distance from $x\in \mathbb{R}^n$ to $C$ is given by
$\text{dist}(x,C)=\inf\limits_{y\in C}\|y-x\|$. If $C$ is a closed set, then $\text{dist}(x,C)=\|P_{C}(x)-x\|.$

Suppose the subdifferential of $\psi(\cdot)$ at $x\in C$ satisfies
$\partial\psi(x)\neq\emptyset$. Then the projected sub-differential
of $\psi(\cdot)$ at $x$ is
\begin{eqnarray*}
	P_{T_{C}(x)}(-\partial\psi(x))=\{P_{T_{C}(x)}(-u)\mid u\in \partial\psi(x)\}.
\end{eqnarray*}
If $\psi(\cdot)$ is continuously differentiable in a neighborhood of
point $x\in C$, by [\cite{rockafellar1998variational}, Exercise 8.8], we have $\partial\psi(x)=
\{\nabla\psi(x)\}$, which means the projected sub-differential is
the projected gradient $P_{T_{C}(x)}(-\nabla\psi(x)).$

We call that a sequence $\{x^{k}\}\subset \mathbb{R}^{n}$ terminates finitely
to $C$ if there exists $k_0$ such that $x^{k}\in C$ for all $k\geq
k_0$. In $EP(\phi,S)$, we call $\phi$ is monotonic on $S\times S$, if
$\phi(x,y)+\phi(y,x)\leq 0$, for $\forall(x,y)\in S\times S$. A function $\phi$ is said to be pseudo-monotone on $S\times S$, if $\phi(x,y)\geq 0 \Longrightarrow \phi(y,x)\leq 0$, for $\forall(x,y)\in S\times S$.

Below we give several special cases of $EP(\phi,S)$ (see \cite{blum1994optimization}).
\begin{example}\label{Example 2.1} \quad Let
	\begin{eqnarray}\label{eq2.1}
		\phi(x,y)=f(y)-f(x),\quad (x,y)\in \mathbb{R}^{n}\times \mathbb{R}^{n},
	\end{eqnarray}
	where $f:\mathbb{R}^{n} \rightarrow \mathbb{R}$. Obviously, $\phi$ satisfies \eqref{eq1.1}. Then $EP(\phi,S)$ is the following mathematical programming problem:
	\begin{eqnarray*}
		(MP)\qquad\qquad\text{Find}\quad \bar{x}\in S\subseteq \mathbb{R}^{n},\quad \text{such~ that}\quad f(\bar{x})\leq f(y),\quad \forall y\in S.
	\end{eqnarray*}
	By \eqref{eq2.1}, $\phi$ is monotonic over $\mathbb{R}^{n}\times \mathbb{R}^{n}$.
\end{example}

\begin{example}\label{Example 2.2} \quad Let
	\begin{eqnarray}\label{eq2.2}
		\phi(x,y)=\left\langle F(x),y-x\right\rangle ,\quad (x,y)\in S\times S,\quad S\subseteq \mathbb{R}^{n},
	\end{eqnarray}
	where $F:S\rightarrow \mathbb{R}^{n}$. Obviously, $\phi $ satisfies \eqref{eq1.1}. Then $EP(\phi,S)$ is the following variational inequality problem:
	\begin{eqnarray*}
		(VIP)\qquad\qquad\text{Find}\quad \bar{x}\in S,\quad \text{such~ that}\quad \left\langle F(\bar{x}),y-\bar{x}\right\rangle \geq 0,\quad \forall y\in S.
	\end{eqnarray*}
	By \eqref{eq2.2}, we know $\phi$ is monotonic on $S\times S\Longleftrightarrow F$ is monotonic on $S$, and $\phi$ is pseudo-monotonic on $S\times S\Longleftrightarrow F$ is pseudo-monotonic over $S$.
\end{example}

\begin{example}\label{Example 2.3} \quad Suppose $S=S_{1}\times S_{2}$, $S_{1}\subseteq\mathbb{R}^{n_{1}}$, $S_{2}\subseteq \mathbb{R}^{n_{2}}$, $(n_{1}+n_{2}=n)$ are all nonempty closed convex sets, $x=(x_{1},x_{2})\in \mathbb{R}^{n_1}\times \mathbb{R}^{n_2}$, $y=(y_{1},y_{2})\in \mathbb{R}^{n_1}\times \mathbb{R}^{n_2}.$ Let
	\begin{eqnarray}\label{eq2.3}
		\phi(x,y)=\varphi(y_{1},x_{2})-\varphi(x_{1},y_{2}),\quad (x,y)\in \mathbb{R}^{n}\times \mathbb{R}^{n},
	\end{eqnarray}
	where $\varphi:\mathbb{R}^{n}\rightarrow \mathbb{R}$. Obviously, $\phi$ satisfies
	\eqref{eq1.1}. Then $EP(\phi,S)$ is the following global saddle point
	problem:
	\begin{eqnarray*}
		(SPP)\qquad\text{Find}\quad \bar{x}=(\bar{x}_{1},\bar{x}_{2})\in S_{1}\times S_{2},\quad \text{such~ that}&\quad \varphi(\bar{x}_{1},y_{2})\leq \varphi(\bar{x}_{1},\bar{x}_{2})\leq \varphi(y_{1},\bar{x}_{2}),\\
		&\quad \forall (y_{1},y_{2})\in S_{1}\times S_{2}.
	\end{eqnarray*}
	By \eqref{eq2.3}, $\phi$ is monotonic over $\mathbb{R}^{n}\times \mathbb{R}^{n}$.
\end{example}

\begin{example}\label{Example 2.4}
	Let $I=\{1,2,\cdots n\}$, $S=\Pi_{i\in I}S_{i}$, $S_{i}\subseteq \mathbb{R}~(i\in I)$ be nonempty closed convex sets. For $x=(x_{1},x_{2},\cdots ,x_{n})\in \mathbb{R}^{n}$, define
	\begin{eqnarray*}
		x^{i}=(x_{1}\cdots x_{i-1},x_{i+1},\cdots ,x_{n})
	\end{eqnarray*}
	with the obvious modifications for the cases $i=1$ and $i=n$.
\end{example}

Suppose
\begin{eqnarray}\label{eq2.4}
	\phi(x,y)=\Sigma_{i\in I}(f_{i}(x^{i},y_{i})-f_{i}(x)),\quad (x,y)\in \mathbb{R}^{n}\times \mathbb{R}^{n},
\end{eqnarray}
where $f_{i}:\mathbb{R}^{n}\rightarrow \mathbb{R}$. Obviously, $\phi$ satisfies \eqref{eq1.1}. Then $EP(\phi,S)$ is the following Nash equilibrium problem:
\begin{eqnarray*}
	(NEP)\qquad \text{Find}\quad \bar{x}\in S,\quad \text{such~ that}\quad f_{i}(\bar{x})\leq f_{i}(\bar{x}^{i},y_{i}),\quad \forall y_{i}\in S_{i},\quad\text{for~all}\quad i\in I.
\end{eqnarray*}

\section{The augmented weak sharpness in the equilibrium problem}

In this section, we present the notions of weak sharpness and strong
non-degeneracy for the solution set $\bar{S}\subset S$ of
$EP(\phi,S)$. Furthermore, in order to provide a more relaxed
conditions on the finite identification of a feasible solution
sequence, we introduce an augmented mapping over the solution set
$\bar{S}$, and establish the concept of augmented weak sharpness for
the solution set $\bar{S}$ on feasible solution sequence. Under
several different cases and very general assumptions, we prove that
this new concept is a generalization of weak sharpness and strong
non-degeneracy.

First, we give the concept of weak sharp minimum in mathematical
programming (see \cite{burke1993weak,ferris1988weak})

\begin{definition}\label{Definition 3.1}
	In mathematical programming $(MP)$, the solution set $\bar{S}\subset S$ is weak sharp minimal, if there exists a constant $\alpha> 0$, such that for $\forall x\in \bar{S}$, we have
	\begin{eqnarray}\label{eq3.1}
		f(y)-f(x)\geq \alpha\cdot \text{dist}(y,\bar{S}),\quad \forall y\in S.
	\end{eqnarray}
	The constant $\alpha$ and the set $\bar{S}$ are called the modulus
	and domain of sharpness for $f$ over $S$, respectively. Clearly,
	$\bar{S}$ is a set of global minima for $f$ over $S$.
\end{definition}

If $(MP)$ is a non-smooth convex programming, then $\bar{S}$ is a weak
sharp minimal set with the module $\alpha$, if and only if
\begin{eqnarray}\label{eq3.2}
	\alpha B\subset \partial f(x)+[T_{S}(x)\cap N_{\bar{S}}(x)]^{\circ},\quad \forall x\in \bar{S},
\end{eqnarray}
where $B$ is a unit ball (see [\cite{burke1993weak}, Theorem 2.6, c]).

If $(MP)$ is a smooth convex programming, then $\bar{S}$ is a weak
sharp minimal set with the module $\alpha$, if and only if
\begin{eqnarray}\label{eq3.3}
	-\nabla f (\bar{x})\in \text{int}\bigcap_{x\in\bar{S}}[T_{S}(x)\cap
	N_{\bar{S}}(x)]^{\circ},\quad \forall \bar{x}\in \bar{S},
\end{eqnarray}
(see [\cite{burke1993weak}, Corollary 2.7, c)]). Here, in the case that $f$ is smooth, \eqref{eq3.3} and \eqref{eq3.2} are equivalent, as $\nabla f(\cdot)$ is a constant vector on $\bar{S}$ (see [\cite{burke1991characterization}, Corollary 6]).

To generalize the characteristics of the solution set into
variational inequalities and the smooth non-convex programming
problems, some literature (\cite{marcotte1998weak,wang2005convergence,wang2013global,xiu2005finite,zhou2012new}) utilize \eqref{eq3.3} to define the weak sharpness of the solution set in these two kinds of problems. Now we use \eqref{eq3.2} to define the weak sharpness of the solution set $\bar{S}$ of $EP(\phi,S)$.

\begin{definition}\label{Definition 3.2}
	In the $EP(\phi,S)$, for $\forall x\in S$, $\partial_y\phi(x,x)\neq\emptyset$, the solution set $\bar{S}\subset S$ is said to be a weak sharp set with the module $\alpha$, if there exists a constant $\alpha> 0$, such that
	\begin{eqnarray}\label{eq3.4}
		\alpha B\subset \partial_{y}\phi(x,x)+[T_{S}(x)\cap \hat{N}_{\bar{S}}(x)]^{\circ},\quad \forall x\in \bar{S}.
	\end{eqnarray}
\end{definition}

\begin{remark}\label{Remark 3.1}
	In \eqref{eq3.4}, we have used $\hat{N}_{\bar{S}}(\cdot)$
	instead of $N_{\bar S}(\cdot)$, since in general, $\bar{S}$ is not
	necessarily convex. When $\bar{S}$ is non-convex, according to [\cite{rockafellar1998variational}, Proposition 6.5], $\hat{N}_{\bar{S}}(\cdot)$ is a closed convex cone, while $N_{\bar S}(\cdot)$ is only a closed cone, and it holds that
	\begin{eqnarray*}
		\hat{N}_{\bar{S}}(\cdot)\subseteq N_{\bar S}(\cdot).
	\end{eqnarray*}
	When $\bar{S}$ is convex, according to [\cite{rockafellar1998variational}, Theorem 6.9], the formula
	above holds inequality. So we can obtain
	\begin{eqnarray*}
		[T_{S}(x)\cap N_{\bar{S}}(x)]^{\circ}\subseteq [T_{S}(x)\cap \hat{N}_{\bar{S}}(x)]^{\circ}.
	\end{eqnarray*}
	Therefore, $\hat{N}_{S}(\cdot)$ makes the conditions of Definition \ref{Definition 3.2} more relaxed.
\end{remark}

Next, we introduce the notion of strong non-degeneracy in $EP(\phi,S)$.

\begin{definition}\label{Definition 3.3}
	In the $EP(\phi,S)$, suppose $\phi(x,\cdot)$ is differentiable at each $x$ in $S$. We call $\bar{S}\subset S$ is strongly non-degenerate, if
	\begin{eqnarray}\label{eq3.5}
		-\nabla_{y}\phi(\bar{x},\bar{x})\in \text{int}N_{S}(\bar{x}),\quad \forall {\bar{x}\in \bar{S}},
	\end{eqnarray}
	and $\bar{x}$ is said to be a strongly non-degenerate point.
\end{definition}

Now we give the main notion in this paper.

\begin{definition}\label{Definition 3.4}
	In the $EP(\phi,S)$, suppose $\bar{S}\subset S$ is a closed set. For any $x\in S$, there is $\partial_{y}\phi(x,x)\neq \emptyset$, and $\{x^{k}\}\subset S.$ We call $\bar{S}$ is augmented weak sharp with respect to $\{x^{k}\}$. For an infinite sequence $K=\{k\mid x^{k}\notin \bar{S}\}$, there exists an augmented mapping (set-valued mapping) $H:\bar{S}\rightarrow 2^{\mathbb{R}^{n}}$ such that the following hold:
	
	$(a)$ there exists a constant $\alpha>0$, such that
	\begin{eqnarray*}
		\alpha B\subset H(z)+[T_{S}(z)\bigcap\hat{N}_{\bar{S}}(z)]^{\circ},\quad \forall z\in \bar{S},
	\end{eqnarray*}
	
	$(b)$ for $\forall u^{k}\in \partial_{y}\phi(x^{k},x^{k})$ and $\forall v^{k}\in H(P_{\bar{S}}(x^{k}))$, it holds that
	\begin{eqnarray*}
		\limsup\limits_{k\in K ,k\rightarrow \infty}\psi_{k}=\frac{1}{\|x^{k}-P_{\bar{S}}(x^{k})\|} \langle u^{k}-v^{k},x^{k}-P_{\bar{S}}(x^{k})\rangle\geq0.
	\end{eqnarray*}
\end{definition}

Now we will discuss the inclusion relation between two concepts,
augmented weak sharpness of the solution set $\bar{S}$ and the weak
sharpness as well as the strong non-degeneracy in several cases.

\subsection{The non-smooth case}

\begin{proposition}\label{Proposition 3.1}
	In the $EP(\phi,S)$, suppose $\bar{S}\subset S$ is a closed set, $\partial_{y}\phi(x,x)\neq \emptyset$ for any $x\in S$, and $\partial_{y}\phi(x,x)$ is monotonic over $S$. If $\bar{S}$ is weakly sharp, then for every $\{x^{k}\}\subset S$, $\bar{S}$ is augmented weakly sharp.
\end{proposition}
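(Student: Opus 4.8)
The plan is to exhibit the augmented mapping explicitly by taking $H$ to coincide with the subdifferential, i.e. setting $H(z)=\partial_{y}\phi(z,z)$ for every $z\in\bar S$. With this choice, condition $(a)$ of Definition~\ref{Definition 3.4} becomes verbatim the weak sharpness inequality \eqref{eq3.4} of Definition~\ref{Definition 3.2}, so it holds with the very modulus $\alpha>0$ supplied by the hypothesis that $\bar S$ is weakly sharp. Note that $H(z)\neq\emptyset$ for each $z\in\bar S\subset S$, because $\partial_{y}\phi(x,x)\neq\emptyset$ is assumed for all $x\in S$; in particular $H$ is a genuine nonempty-valued map, as Definition~\ref{Definition 3.4} requires.

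The remaining work is condition $(b)$, and this is where the monotonicity assumption does all the lifting. Fix $k\in K$ and write $z^{k}:=P_{\bar S}(x^{k})\in\bar S\subset S$ (any projection point will do, since $\bar S$ is closed). Because $k\in K$ means $x^{k}\notin\bar S$, we have $x^{k}\neq z^{k}$ and hence $\|x^{k}-z^{k}\|>0$, so the quotient $\psi_{k}$ is well defined. Now take arbitrary $u^{k}\in\partial_{y}\phi(x^{k},x^{k})$ and $v^{k}\in H(z^{k})=\partial_{y}\phi(z^{k},z^{k})$. Since both $x^{k}$ and $z^{k}$ lie in $S$, the monotonicity of $\partial_{y}\phi(\cdot,\cdot)$ over $S$ gives
\begin{eqnarray*}
	\langle u^{k}-v^{k},\,x^{k}-z^{k}\rangle\geq 0 .
\end{eqnarray*}
Dividing by $\|x^{k}-z^{k}\|>0$ yields $\psi_{k}\geq 0$ for every $k\in K$, and passing to the upper limit preserves this, so $\limsup_{k\in K,\,k\to\infty}\psi_{k}\geq 0$. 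Since $u^{k}$ and $v^{k}$ were arbitrary selections, $(b)$ follows, completing the verification.

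I do not expect a genuine obstacle here: once the augmented mapping is taken to be the subdifferential itself, part $(a)$ is the weak sharpness hypothesis and part $(b)$ is an immediate pointwise consequence of monotonicity, in fact stronger than the $\limsup\geq 0$ actually demanded. The only points requiring care are bookkeeping: that the projection $P_{\bar S}(x^{k})$ is meaningful even though $\bar S$ need not be convex (guaranteed by closedness of $\bar S$, and unaffected by any non-uniqueness of the projection since the argument holds for any projection point), and that membership in the index set $K$ forces the denominator to be strictly positive so that each $\psi_{k}$ is defined. Should one wish to weaken the monotonicity hypothesis to, say, pseudo-monotonicity, the pointwise bound would be lost and one would have to reason with the $\limsup$ more delicately; but under the stated assumptions no such refinement is needed.
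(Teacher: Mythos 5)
Your proposal is correct and follows exactly the paper's own argument: the paper also sets $H(z)=\partial_{y}\phi(z,z)$ on $\bar{S}$, obtains condition $(a)$ directly from the weak sharpness inclusion \eqref{eq3.4}, and deduces condition $(b)$ from the monotonicity of $\partial_{y}\phi(x,x)$ over $S$. The only difference is that you spell out the pointwise bound $\psi_{k}\geq 0$ and the well-definedness of the projection, details the paper leaves implicit.
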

\begin{proof}
	Let $K=\{k\mid x^{k}\notin \bar{S}\}$ be an infinite
	sequence. Set
	\begin{eqnarray*}
		H(z)=\partial_y \phi(z,z),\quad \forall z\in \bar{S}.
	\end{eqnarray*}
	By \eqref{eq3.4} we know that $(a)$ in Definition \ref{Definition 3.4} holds, and (b) also holds by the monotonicity of $\partial_y\phi(x,x)$.
\end{proof}

The following proposition provides a sufficient condition for the
monotonicity of $\partial_y\phi(x,x)$.

\begin{proposition}\label{Proposition 3.2}
	In the $EP(\phi,S)$, suppose that $\phi$ satisfies the following conditions:
	
	$(i)$ For $\forall x\in S$, $\phi(x,\cdot)$ is convex function over
	$\mathbb{R}^{n}$,
	
	$(ii)$ The function $\phi$ is monotonic (pseudo-monotonic) over $S\times S$.
	
	Then $\partial_y \phi(x,x)$ is monotonic (pseudo-monotonic) over $S$.
\end{proposition}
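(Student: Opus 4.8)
The plan is to translate the function-level (pseudo-)monotonicity of $\phi$ into the operator-level property of the subdifferential map by exploiting a single subgradient inequality together with the normalization $\phi(x,x)=0$ from \eqref{eq1.1}. Write $G(x)=\partial_{y}\phi(x,x)$ for the map whose (pseudo-)monotonicity is in question. The fact driving everything is the following: since condition $(i)$ makes $\phi(x,\cdot)$ a finite convex function on all of $\mathbb{R}^{n}$, its subdifferential is nonempty everywhere and coincides with the generalized sub-differential of Definition 8.3; hence for any $u\in\partial_{y}\phi(x,x)$ the subgradient inequality between the points $x$ and $y$ gives
\begin{eqnarray*}
	\phi(x,y)-\phi(x,x)\geq \langle u,y-x\rangle,\qquad \forall y\in\mathbb{R}^{n}.
\end{eqnarray*}
Because $\phi(x,x)=0$, this reduces to $\phi(x,y)\geq\langle u,y-x\rangle$. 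This is the one identity I would establish first and then reuse verbatim in both cases.

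For the monotone case I would take $x,y\in S$, $u\in G(x)$, $v\in G(y)$, and apply the displayed inequality twice: once at $x$ to obtain $\phi(x,y)\geq\langle u,y-x\rangle$, and once at $y$ with the two arguments interchanged to obtain $\phi(y,x)\geq\langle v,x-y\rangle$. Summing the two yields $\phi(x,y)+\phi(y,x)\geq\langle u-v,y-x\rangle$. Condition $(ii)$ gives $\phi(x,y)+\phi(y,x)\leq 0$, so the right-hand side is nonpositive, i.e.\ $\langle u-v,x-y\rangle\geq 0$, which is exactly the monotonicity of $G$ over $S$.

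For the pseudo-monotone case I would argue directly through the defining implication. Suppose $u\in G(x)$ satisfies $\langle u,y-x\rangle\geq 0$. The subgradient inequality at $x$ then forces $\phi(x,y)\geq\langle u,y-x\rangle\geq 0$, so the pseudo-monotonicity of $\phi$ in $(ii)$ yields $\phi(y,x)\leq 0$. Applying the subgradient inequality at $y$ gives $\langle v,x-y\rangle\leq\phi(y,x)\leq 0$ for every $v\in G(y)$, that is $\langle v,y-x\rangle\geq 0$. This is precisely the pseudo-monotonicity of $G$.

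The argument is short, and I expect no genuine obstacle beyond careful bookkeeping: keeping the two arguments of $\phi$ and the sign of $y-x$ versus $x-y$ straight when the roles of $x$ and $y$ are swapped. The only point I would flag explicitly, though it is routine, is that the subgradient inequality may be invoked for arbitrary test points $y$ because $(i)$ assumes convexity on all of $\mathbb{R}^{n}$ rather than merely on $S$, so no feasibility restriction on $y$ intervenes; this, together with the identification of the generalized sub-differential with the ordinary convex subdifferential, is what legitimizes the one inequality on which the whole proof rests.
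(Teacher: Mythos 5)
Your proposal is correct and follows essentially the same route as the paper: both proofs combine the convex subgradient inequality for $\phi(x,\cdot)$ with the normalization $\phi(x,x)=0$, then invoke the monotonicity of $\phi$ (summing the two subgradient inequalities) or its pseudo-monotonicity (chaining the defining implication) to conclude. The only difference is organizational — you isolate the inequality $\phi(x,y)\geq\langle u,y-x\rangle$ as a reusable lemma while the paper starts from $0\geq\phi(x,z)+\phi(z,x)$ and expands downward — which is not a substantive distinction.
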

\begin{proof}By $(i)$ we have $\partial_{y}\phi(x,x)\neq \emptyset$,
	for $\forall x\in S$. Now set $u_{x}\in
	\partial_{y}\phi(x,x),u_{z}\in
	\partial_{y}\phi(z,z)$. Suppose $\phi$ is monotonic over $S\times S$. Then we have
	\begin{eqnarray*}
		0 &\geq & \phi(x,z)+\phi(z,x)\\
		&=& \phi(x,z)-\phi(x,x) +\phi(z,x)-\phi(z,z)\quad [by\quad \eqref{eq1.1}]\\
		&\geq&\left\langle u_{x},z-x\right\rangle +\left\langle u_{z},x-z\right\rangle \qquad [by\quad (i)]\\
		&=&\left\langle u_{x}-u_{z},z-x\right\rangle,
	\end{eqnarray*}
	i.e., $\partial_{y}\phi(x,x)$ is
	monotonic over $S$. Suppose $\phi$ is pseudo-monotonic over $S\times
	S$. If
	$$\left\langle u_{x},z-x\right\rangle \geq0,$$
	then we can get
	\begin{eqnarray*}
		\phi(x,z)&=&\phi(x,z)-\phi(x,x)\\
		&\geq & \left\langle u_{x},z-x\right\rangle \geq0 .
	\end{eqnarray*}
	Therefore, according to the pseudo-monotonicity of $\phi$, it holds that
	$$0\geq\phi(z,x)=\phi(z,x) -\phi(z,z)\geq \left\langle u_{z},x-z\right\rangle ,$$
	i.e., $\partial_{y}\phi(x,x)$ is pseudo-monotonic over $S$.
\end{proof}

\begin{remark}\label{Remark 3.2}
	The following two examples show that the two assumptions in Proposition \ref{Proposition 3.2} are only sufficient and not necessary conditions for that $\partial_{y}\phi(x,x)$ is monotonic over $S$.
\end{remark}

\begin{example}\label{Example 3.1}
	In the $EP(\phi,S)$, let $\phi(x,y)=e^{x-y}-e^{y-x}$ and $S=[0,1].$ Then
	$\nabla_{y}\phi(x,x)$ is monotonic over $S$, while $\phi(x,\cdot)$
	is non-convex over $S$, for $\forall x\in [0,1)$.
\end{example}
\begin{example}\label{Example 3.2}
	In $EP(\phi,S)$, let $\phi(x,y)=e^{y^{2}-x^{2}}-1$ and $S=R$. Then $\nabla_{y}\phi(x,x)$ is monotonic over $S$, while $\phi$ is not non-monotonic over $S\times S$.
\end{example}
We note that [\cite{burke1991characterization}, Theorem 5] gave the characteristic description for the solution set of a non-smooth convex programming. The result in [\cite{burke1991characterization}, Theorem 5] not only contributes to the understanding of the nature of the solution set of convex programming, but also plays an important role in the analysis of the weak sharp minimality of the solution set in convex programming. So next this result is generalized to the solution set of $EP(\phi,S)$, and further apply it to the analysis on the weak sharpness of the solution set of $EP(\phi,S)$.

\begin{theorem}\label{Theorem 3.1}
	In $EP(\phi,S)$, suppose that $\phi$ satisfies the following conditions:
	
	$(i)$ For $\forall x\in S$, $\phi(x,\cdot)$ is a convex function
	over $\mathbb{R}^{n}$.
	
	$(ii)$ For $\forall x, z\in \bar{S}$, there is
	$\phi(x,y)=\phi(z,y),\quad \forall y\in \mathbb{R}^{n}$.\\
	Furthermore, suppose $\bar{x}\in \bar{S}$, and $A$ is a convex set
	satisfying $\bar{S}\subseteq A \subseteq S$. Then we have
	\begin{eqnarray}
		\bar{S}&=&\{x\in S\mid \partial_{y} \phi(x,x)\cap(-N_{S}(x))=\partial_{y} \phi(\bar{x},\bar{x})\cap(-N_S(\bar{x}))\}\label{eq3.6}\\
		&=&\{x\in A\mid \partial_{y} \phi(x,x)\cap(-N_{S}(x))=
		\partial_{y} \phi(\bar{x},\bar{x})\cap(-N_{S}(\bar{x}))\}.\label{eq3.7} 
	\end{eqnarray}
\end{theorem}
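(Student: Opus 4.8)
The plan is to exploit hypothesis $(ii)$ to collapse the bifunction on the solution set to a single convex function. Fix $\bar x\in\bar S$ and set $g(\cdot):=\phi(\bar x,\cdot)$; by $(ii)$ one has $\phi(x,\cdot)=g$ for every $x\in\bar S$, and then \eqref{eq1.1} together with the definition of $\bar S$ shows that each $x\in\bar S$ is a global minimizer of $g$ over $S$ with $g(x)=\phi(x,x)=0$. Writing $D:=\partial_y\phi(\bar x,\bar x)\cap(-N_S(\bar x))=\partial g(\bar x)\cap(-N_S(\bar x))$, I would first record that $D\neq\emptyset$: since $g$ is finite-valued and convex by $(i)$, the subdifferential sum rule applies with no constraint qualification required, so optimality of $\bar x$ gives $0\in\partial g(\bar x)+N_S(\bar x)$, i.e. some $v\in\partial g(\bar x)$ satisfies $-v\in N_S(\bar x)$. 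Denoting by $\Omega_S$ and $\Omega_A$ the right-hand sides of \eqref{eq3.6} and \eqref{eq3.7}, the inclusions $\bar S\subseteq A\subseteq S$ give $\Omega_A\subseteq\Omega_S$ at once, so it suffices to prove $\bar S\subseteq\Omega_A$ and $\Omega_S\subseteq\bar S$, which together force $\bar S=\Omega_A=\Omega_S$.

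For $\bar S\subseteq\Omega_A$, take $x\in\bar S$ (so $x\in A$) and show $\partial g(x)\cap(-N_S(x))=\partial g(\bar x)\cap(-N_S(\bar x))$. Given $v$ in the left-hand intersection, the subgradient inequality for $g$ at $x$ evaluated at $\bar x$ yields $\langle v,\bar x-x\rangle\le 0$, while $-v\in N_S(x)$ and $\bar x\in S$ give $\langle v,\bar x-x\rangle\ge 0$; hence $\langle v,\bar x-x\rangle=0$. Using $g(x)=g(\bar x)=0$, this orthogonality lets me transfer the subgradient inequality from $x$ to $\bar x$, showing $v\in\partial g(\bar x)$, and the same relation upgrades $-v\in N_S(x)$ to $-v\in N_S(\bar x)$; the reverse containment is symmetric. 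This step, matching the normal-cone intersections at two distinct minimizers, is where convexity $(i)$ and the common-minimizer structure from $(ii)$ are essential, and I expect it to be the main obstacle.

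For the reverse inclusion $\Omega_S\subseteq\bar S$, take $x\in S$ with $\partial_y\phi(x,x)\cap(-N_S(x))=D$. Since $D\neq\emptyset$, pick $v$ in this set. Then $v\in\partial_y\phi(x,x)$ and, because $\phi(x,\cdot)$ is convex by $(i)$ with $\phi(x,x)=0$, the global subgradient inequality gives $\phi(x,y)\ge\langle v,y-x\rangle$ for all $y$; meanwhile $-v\in N_S(x)$ gives $\langle v,y-x\rangle\ge 0$ for all $y\in S$. Combining these yields $\phi(x,y)\ge 0$ for every $y\in S$, i.e. $x\in\bar S$. Chaining $\bar S\subseteq\Omega_A\subseteq\Omega_S\subseteq\bar S$ then closes the argument and establishes both \eqref{eq3.6} and \eqref{eq3.7} simultaneously.
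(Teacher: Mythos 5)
Your proposal is correct and follows essentially the same route as the paper's proof: both establish the key orthogonality $\langle v,\bar x-x\rangle=0$ at two solution points to transfer subgradient and normal-cone membership (giving $\bar S\subseteq$ RHS), and both prove the converse by combining the subgradient inequality with $-v\in N_S(x)$ to get $\phi(x,y)\ge 0$ on $S$. Your reformulation via the common function $g=\phi(\bar x,\cdot)$, the chain $\bar S\subseteq\Omega_A\subseteq\Omega_S\subseteq\bar S$, and the explicit sum-rule justification that $D\neq\emptyset$ are only cosmetic refinements of the paper's argument.
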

\begin{proof}Let
	$$\hat{S}=\{x\in S\mid \partial_{y}
	\phi(x,x)\cap(-N_{S}(x))= \partial_{y} \phi(\bar{x},\bar{x})\cap(-N_{S}(\bar{x}))\}.$$
	Next we only need to prove that \eqref{eq3.6} holds, i.e.,
	$\bar{S}=\hat{S}$. Since $A$ satisfies $\bar{S}\subseteq A\subseteq
	S$, with $S$ replaced with $A$ in \eqref{eq3.6}, \eqref{eq3.7} yields immediately. We first prove $\bar{S}\subseteq\hat{S}$. Let $\hat{x}\in \bar{S}$. According to $(i)$ we have $\partial_{y} \phi(\hat{x},\hat{x})
	\bigcap(-N_{S}(\hat{x}))\neq \emptyset$. Take
	$\hat{u}\in\partial_{y} \phi(\hat{x},\hat{x})
	\bigcap(-N_{S}(\hat{x}))$, we can immediately have $\hat{u}\in
	\partial_{y} \phi(\hat{x},\hat{x})$, and
	\begin{eqnarray}\label{eq3.8}
		\left\langle \hat{u},x-\hat{x}\right\rangle \geq0,\quad \forall x\in S.
	\end{eqnarray}
	Since $\hat{x}, \bar{x}\in \bar{S}$, according to $(ii)$ and
	$\phi(x,x)=0$, we get
	$\phi(\hat{x},\bar{x})=\phi(\bar{x},\bar{x})=0$. Thus by a
	combination of $(i)$ and \eqref{eq3.8} we obtain that
	$$0=\phi(\hat{x},\bar{x})-\phi(\hat{x},\hat{x})\geq\left\langle \hat{u},\bar{x}-\hat{x}\right\rangle \geq0.$$
	Therefore,
	\begin{eqnarray}\label{eq3.9}
		\left\langle \hat{u},\bar{x}-\hat{x}\right\rangle =0.
	\end{eqnarray}
	For
	$\forall y\in \mathbb{R}^{n}$, According to $(i)$, $(ii)$ and \eqref{eq3.9}, we
	immediately obtain that
	\begin{eqnarray*}
		\phi(\bar{x},y)-\phi(\bar{x},\bar{x})&=&
		\phi(\hat{x},y)-\phi(\hat{x},\hat{x})\\
		&\geq&\left\langle \hat{u},y-\hat{x}\right\rangle \\
		&=&\left\langle \hat{u},y-\bar{x}\right\rangle +\left\langle \hat{u},\bar{x}-\hat{x}\right\rangle \\
		&=&\left\langle \hat{u},y-\bar{x}\right\rangle .
	\end{eqnarray*}
	So $\hat{u}\in\partial_{y}\phi(\bar{x},\bar{x})$, and according to \eqref{eq3.8}, \eqref{eq3.9}, we obtain that
	\begin{eqnarray*}
		\left\langle \hat{u},x-\bar{x}\right\rangle &=&\left\langle \hat{u},x-\hat{x}\right\rangle +\left\langle \hat{u},\hat
		{x}-\bar{x}\right\rangle \\
		&=&\left\langle \hat{u},x-\hat{x}\right\rangle \geq0
	\end{eqnarray*}
	holds for $\forall x\in S$, i.e., $\hat{u}\in -N_{S}(\bar{x})$. Thus
	$$\partial_{y} \phi(\hat{x},\hat{x})\cap(-N_{S}(\hat{x}))
	\subseteq \partial_{y} \phi(\bar{x},\bar{x})\cap(-N_{S}(\bar{x})).$$
	Since $\bar{x},\hat{x}$ are selected from $\bar{S}$ arbitrarily,
	the inverse inclusion relation of the above relationship also
	holds. Therefore, $\bar{S}\subseteq \hat{S}.$
	
	Now we prove $\hat{S}\subseteq \bar{S}$. Assume $\hat{x}\in\hat{S}.$
	Since $\phi(\bar{x},\bar{x})\bigcap(-N_{S}(\bar{x}))\neq\emptyset$,
	then $\partial_{y}
	\phi(\hat{x},\hat{x})\bigcap$\\$(-N_{S}(\hat{x}))\neq\emptyset.$
	Taking $\hat{u}\in\partial_{y}
	\phi(\hat{x},\hat{x})\bigcap(-N_{S}(\hat{x}))$, for $\forall y\in
	S$ we have
	\begin{eqnarray*}
		\phi(\hat{x},y)&=&\phi(\hat{x},y)-\phi(\hat{x},\hat{x})\\
		&\geq&\left\langle \hat{u},y-\hat{x}\right\rangle \geq0,
	\end{eqnarray*}
	i.e., $\hat{x}\in\bar{S}$, $\hat{S}\subseteq\bar{S}.$
\end{proof}

\begin{remark}\label{Remark 3.3}
	For the convex programming problem $(MP)$, the specific case of equilibrium problem $EP(\phi,S)$, by Example \ref{Example 2.1}, $(i),~(ii)$ obviously hold. But in fact, $(i),(ii)$ hold not only for the convex programming in $EP(\phi,S)$. For example, when $\phi(x,y)=(1-x^{2})(y-x)$, $S=[-1,1]$, $(i)$, $(ii)$ also hold obviously.
\end{remark}

Let
$$G:=\bigcap_{X\in\bar{S}}[T_{S}(x)\bigcap\hat{N}_{\bar{S}}(x)]^{\circ},$$
we have the following proposition.

\begin{proposition}\label{Proposition 3.3}
	Under the assumptions of Theorem \ref{Theorem 3.1}, and furthermore suppose that $\bar{S}\subset S$ is closed, $\partial_{y} \phi(x, x)$ is monotonic over $S$. If
	\begin{eqnarray}\label{eq3.10}
		-\partial_{y} \phi(x,x )\bigcap (-N_{S}(x))\subset \text{int} G \quad\text{for}\quad \forall x\in \bar{S},
	\end{eqnarray}
	then $\bar{S}$ is augmented weakly sharp for all $\{x^{k}\}\subset S$.
\end{proposition}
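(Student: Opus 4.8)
The plan is to exploit Theorem~\ref{Theorem 3.1} to collapse the whole problem onto a single constant set, and then build the augmented mapping directly from it. Write $C_z:=[T_{S}(z)\bigcap\hat{N}_{\bar{S}}(z)]^{\circ}$, so that $G=\bigcap_{z\in\bar{S}}C_z$, and put $W_x:=\partial_y\phi(x,x)\bigcap(-N_S(x))$ for $x\in\bar{S}$. Because the hypotheses of Theorem~\ref{Theorem 3.1} are in force, that theorem shows $W_x$ is independent of $x\in\bar{S}$; I denote this common set by $W$. It is nonempty: for $x\in\bar{S}$ the convex function $\phi(x,\cdot)$ attains its minimum over $S$ at $y=x$ (since $\phi(x,y)\geq 0=\phi(x,x)$ for all $y\in S$), so the optimality condition for convex minimization produces $u\in\partial_y\phi(x,x)$ with $-u\in N_S(x)$, i.e. $u\in W$. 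I read \eqref{eq3.10} as $-W\subset\text{int}\,G$, the natural set-valued analogue of the smooth condition \eqref{eq3.3}.

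With this reduction, I would take the augmented mapping to be the constant map $H(z):=W$ for all $z\in\bar{S}$ and check $(a)$ and $(b)$ of Definition~\ref{Definition 3.4}. For $(a)$, fix once and for all some $w_0\in W$. Then $-w_0\in -W\subset\text{int}\,G$, and openness of $\text{int}\,G$ supplies a single $\alpha>0$, not depending on $z$, with $-w_0+\alpha B\subset G$. Since $G\subset C_z$ for every $z\in\bar{S}$, translating the inclusion $-w_0+\alpha B\subset C_z$ by $w_0$ gives $\alpha B\subset w_0+C_z\subset W+C_z=H(z)+C_z$, which is $(a)$ with a uniform modulus $\alpha$.

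For $(b)$, observe that $H(z)=W\subset\partial_y\phi(z,z)$ by the very definition of $W$. Let $K=\{k\mid x^k\notin\bar{S}\}$ and set $z^k:=P_{\bar{S}}(x^k)\in\bar{S}\subset S$; for $k\in K$ one has $x^k\neq z^k$, hence $\|x^k-z^k\|>0$. Given any $u^k\in\partial_y\phi(x^k,x^k)$ and any $v^k\in H(z^k)=W\subset\partial_y\phi(z^k,z^k)$, the assumed monotonicity of $\partial_y\phi(\cdot,\cdot)$ over $S$, applied to the pair $x^k,z^k\in S$, yields $\langle u^k-v^k,x^k-z^k\rangle\geq 0$. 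Thus every $\psi_k\geq 0$, so in particular $\limsup_{k\in K,k\to\infty}\psi_k\geq 0$, which is $(b)$. As $\{x^k\}\subset S$ was arbitrary, $\bar{S}$ is augmented weakly sharp for every feasible sequence.

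The main obstacle is the bookkeeping rather than any hard estimate. First, one must interpret \eqref{eq3.10} as $-W\subset\text{int}\,G$ (not as the intersection of the separately negated sets), since only this reading reduces to \eqref{eq3.3} in the smooth case and keeps $H(z)=W$ nonempty. Second, the step $-w_0+\alpha B\subset C_z\Rightarrow\alpha B\subset w_0+C_z$ is merely a translation of the whole inclusion by the fixed $w_0\in W$, but it is crucial that $w_0$ and $\alpha$ be selected once for all $z$; the constancy of $W$ delivered by Theorem~\ref{Theorem 3.1} is precisely what allows a single $w_0$ and a single $\alpha$ to work simultaneously at every $z\in\bar{S}$ and thereby produce the uniform modulus demanded by $(a)$.
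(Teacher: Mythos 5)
Your proof is correct, and it gets to the conclusion by a somewhat more direct and more economical route than the paper's own argument. The paper first proves that $\bar{S}$ is weakly sharp in the sense of Definition \ref{Definition 3.2}: it invokes [\cite{tyrrell1970convex}, Theorem 23.4] to get compactness of $\partial_{y}\phi(x,x)$, hence (via Theorem \ref{Theorem 3.1}) that $W=\partial_{y}\phi(x,x)\cap(-N_{S}(x))$ is a nonempty \emph{compact} constant set on $\bar{S}$, and then uses the compactness of $-W$ inside the open set $\text{int}\,G$ to extract a uniform $\alpha$ with $\alpha B-W\subset G$, which yields \eqref{eq3.4}; after that it simply cites Proposition \ref{Proposition 3.1}, so its augmented mapping is $H(z)=\partial_{y}\phi(z,z)$. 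You instead take the constant augmented mapping $H(z)\equiv W$ and verify $(a)$ and $(b)$ of Definition \ref{Definition 3.4} by hand: for $(a)$ a single point $w_{0}\in W$ together with openness of $\text{int}\,G$ suffices, so the compactness step (and hence the appeal to [\cite{tyrrell1970convex}, Theorem 23.4]) is never needed; for $(b)$ you use the monotonicity of $\partial_{y}\phi(x,x)$ together with the inclusion $W\subset\partial_{y}\phi(z,z)$, which is exactly where the constancy from Theorem \ref{Theorem 3.1} enters, and which is the same inequality that drives Proposition \ref{Proposition 3.1}. Your reading of \eqref{eq3.10} as $-W\subset\text{int}\,G$, and your nonemptiness argument for $W$ via the convex optimality condition, both match what the paper implicitly uses. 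What the paper's route buys is the intermediate statement that $\bar{S}$ is weakly sharp (of independent interest) and the reuse of an existing proposition; what your route buys is the observation that compactness of the subdifferential is superfluous for this conclusion, plus a smaller (constant) augmented mapping. Both proofs rest on the same two pillars, the constancy of $W$ from Theorem \ref{Theorem 3.1} and the monotonicity of $\partial_{y}\phi(x,x)$, so the difference is one of economy rather than substance.
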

\begin{proof}
	First, by the assumption $(i)$ in Theorem \ref{Theorem 3.1} and [\cite{tyrrell1970convex}, Theorem 23.4], we get that $\partial_{y} \phi(x,x)$ is a nonempty compact set for $\forall x\in \mathbb{R}^{n}$. Furthermore, by Theorem \ref{Theorem 3.1}, we obtain that $\partial_{y} \phi(x,x)\bigcap (-N_{S}(x))$ is a nonempty compact constant set over $\bar{S}$. So by \eqref{eq3.10}, there exists a constant $\alpha>0$ such that for $\forall x\in \bar{S}$,
	$$\alpha B-\partial_{y} \phi(x,x)\cap(-N_{S}(x))
	\subset[T_{S}(x)\cap[\hat{N}_{\bar{S}}(x)]^{\circ}.$$
	
	From the formula above, for $\forall x\in \bar{S}$, we get that
	\begin{eqnarray*}
		\alpha B &\subset & \partial_{y} \phi(x,x
		)\cap(-N_{S}(x))+[T_{S}(x)\cap[\hat{N}_{\bar{S}}(x)]^{\circ}\\
		&\subset&\partial_{y}
		\phi(x,x)+[T_{S}(x)\cap[\hat{N}_{\bar{S}}(x)]^{\circ}.
	\end{eqnarray*}
	Therefore, $\bar{S}$ is a weak sharp set by Definition \ref{Definition 3.2}.
	According to the monotonicity of $\partial_{y} \phi(x,x)$ and
	Proposition \ref{Proposition 3.1}, the proof is complete.
\end{proof}

\subsection{The smooth case}

In this subsection, we assume that $\phi(x,\cdot)$ is continuously
differentiable on $\mathbb{R}^{n}$ for $\forall x\in S$. At this time, we have
$\partial_y\phi(x,\cdot)=\{\nabla_y\phi(x,\cdot)\}.$

\begin{proposition}\label{Proposition 3.4}
	In the $EP(\phi,S)$, suppose $\bar{S}\subset S$ is a closed set, and $\{x^{k}\} \subset S$ satisfies
	\begin{eqnarray}\label{eq3.11}
		\lim\limits_{k\rightarrow\infty}\| \nabla_{y}\phi
		(x^k,x^k)-\nabla_{y}\phi (P_{\bar{S}} (x^{k}),P_{\bar{S}} (x^{k}))
		\|=0.
	\end{eqnarray}
	
	If $\bar{S}$ is weak sharp, then $\bar{S}$ is
	augmented weakly sharp with respect to $\{ x^{k} \}$.
\end{proposition}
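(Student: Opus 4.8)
The plan is to imitate the construction used in Proposition \ref{Proposition 3.1}: since we are now in the smooth case, $\partial_y\phi(x,x)=\{\nabla_y\phi(x,x)\}$ is a singleton for every $x\in S$, so I would take the augmented mapping to be the gradient itself,
\begin{eqnarray*}
	H(z)=\partial_y\phi(z,z)=\{\nabla_y\phi(z,z)\},\qquad \forall z\in\bar{S},
\end{eqnarray*}
and fix the infinite index set $K=\{k\mid x^k\notin\bar{S}\}$. With this choice the two requirements $(a)$ and $(b)$ of Definition \ref{Definition 3.4} have to be verified. The crucial structural difference from Proposition \ref{Proposition 3.1} is that monotonicity of $\partial_y\phi(x,x)$ is no longer assumed; its role in establishing $(b)$ will instead be played by the asymptotic condition \eqref{eq3.11}.

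Condition $(a)$ is immediate. With $H(z)=\partial_y\phi(z,z)$, the inclusion
\begin{eqnarray*}
	\alpha B\subset H(z)+[T_S(z)\cap\hat{N}_{\bar{S}}(z)]^\circ,\qquad \forall z\in\bar{S},
\end{eqnarray*}
is precisely the weak sharpness inclusion \eqref{eq3.4} of Definition \ref{Definition 3.2}, which holds by hypothesis with some modulus $\alpha>0$.

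For condition $(b)$, write $p^k=P_{\bar{S}}(x^k)$. Because the gradient is single-valued, the only admissible selections are $u^k=\nabla_y\phi(x^k,x^k)$ and $v^k=\nabla_y\phi(p^k,p^k)$, so that $u^k-v^k=\nabla_y\phi(x^k,x^k)-\nabla_y\phi(p^k,p^k)$. For $k\in K$ we have $x^k\notin\bar{S}$, and since $\bar{S}$ is closed the projection exists with $\|x^k-p^k\|>0$, so $\psi_k$ is well defined. Applying the Cauchy--Schwarz inequality to the numerator gives
\begin{eqnarray*}
	|\psi_k|=\frac{|\langle u^k-v^k,\,x^k-p^k\rangle|}{\|x^k-p^k\|}\le \|u^k-v^k\|=\|\nabla_y\phi(x^k,x^k)-\nabla_y\phi(p^k,p^k)\|.
\end{eqnarray*}
By the hypothesis \eqref{eq3.11} the right-hand side tends to $0$ as $k\to\infty$, hence $\psi_k\to 0$ along $K$, and in particular $\limsup_{k\in K,\,k\to\infty}\psi_k=0\ge 0$, which is exactly $(b)$.

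I expect no serious obstacle here: once the augmented mapping is correctly identified as the gradient, the whole argument collapses to the elementary Cauchy--Schwarz bound combined with \eqref{eq3.11}. The only points demanding care are the well-definedness of $\psi_k$ on $K$ (guaranteed by closedness of $\bar{S}$ together with $x^k\notin\bar{S}$) and the observation that \eqref{eq3.11} controls the full quotient $|\psi_k|$ uniformly, not merely the inner product, which is what forces the $\limsup$ to be nonnegative. This is the precise mechanism by which the smoothness assumption, through the asymptotic condition \eqref{eq3.11}, substitutes for the monotonicity hypothesis of Proposition \ref{Proposition 3.1}.
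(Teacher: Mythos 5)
Your proposal is correct and follows essentially the same route as the paper: both take the augmented mapping $H(z)=\nabla_y\phi(z,z)$ on $\bar{S}$, obtain condition $(a)$ directly from the weak sharpness inclusion \eqref{eq3.4}, and deduce condition $(b)$ from \eqref{eq3.11}. The only difference is that you spell out the Cauchy--Schwarz bound $|\psi_k|\le\|\nabla_y\phi(x^k,x^k)-\nabla_y\phi(P_{\bar{S}}(x^k),P_{\bar{S}}(x^k))\|$, which the paper leaves implicit in its ``immediately obtain'' step.
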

\begin{proof}Let $K=\{k\mid x^{k} \notin\bar{S}\}$ be an infinite
	sequence, and set
	$$H(z)=\nabla_{y}\phi(z,z),\quad\forall z \in \bar{S}.$$
	By \eqref{eq3.4}, we know that $(a)$ holds in Definition \ref{Definition 3.4}, and by \eqref{eq3.11}, we immediately obtain that
	$$\limsup\limits_{k\in K,~k \rightarrow\infty}\psi_{k}=\limsup\limits_{k\in K,~k \rightarrow\infty}
	\frac{1}{\parallel
		x^{k}-P_{\bar{S}}(x^{k})\|}\langle\nabla_{y}\phi(x^{k},x^{k})
	-H(P_{\bar{S}}(x^{k})),x^{k}-P_{\bar{S}}(x^{k})\rangle=0,$$
	i.e., $(b)$ holds in Definition \ref{Definition 3.4}.
\end{proof}

\begin{proposition}\label{Proposition 3.5}
	In $EP(\phi,S)$, suppose $\bar{S}\subset S$ is a closed set, $\{x^{k}\}\subset S$, $\{\nabla_{y}\phi(x^{k},x^{k})\}$ is bounded and any one of its accumulations $\bar{p}$ satisfies $-\bar{p}\in \text{int} G$. Then $\bar{S}$ is augmented weakly sharp with respect to $\{ x^{k} \}$.
\end{proposition}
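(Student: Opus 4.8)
The plan is to take the augmented mapping $H$ to be a \emph{constant} set-valued map whose single value is a well-chosen accumulation point of $\{\nabla_{y}\phi(x^{k},x^{k})\}$; with this choice the two requirements $(a)$ and $(b)$ of Definition~\ref{Definition 3.4} decouple and each becomes almost immediate. First I would dispose of the trivial case: if $K=\{k\mid x^{k}\notin\bar S\}$ is finite then $\{x^{k}\}$ terminates finitely to $\bar S$ and $(b)$ is vacuous, so assume $K$ is infinite. Because $\{\nabla_{y}\phi(x^{k},x^{k})\}_{k\in K}$ is a subsequence of a bounded sequence, Bolzano--Weierstrass gives an infinite $K'\subseteq K$ and a vector $\bar p$ with $\nabla_{y}\phi(x^{k},x^{k})\to\bar p$ as $k\in K',\,k\to\infty$. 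Since $\bar p$ is then an accumulation point of $\{\nabla_{y}\phi(x^{k},x^{k})\}$, the hypothesis gives $-\bar p\in\text{int}\,G$. I then set $H(z)=\{\bar p\}$ for all $z\in\bar S$.

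For $(a)$, from $-\bar p\in\text{int}\,G$ and $G=\bigcap_{x\in\bar S}[T_{S}(x)\cap\hat N_{\bar S}(x)]^{\circ}$ there is $\alpha>0$ with
\begin{eqnarray*}
-\bar p+\alpha B\subseteq G\subseteq [T_{S}(z)\cap\hat N_{\bar S}(z)]^{\circ},\qquad\forall z\in\bar S .
\end{eqnarray*}
Translating by $\bar p$ yields $\alpha B\subseteq \bar p+[T_{S}(z)\cap\hat N_{\bar S}(z)]^{\circ}=H(z)+[T_{S}(z)\cap\hat N_{\bar S}(z)]^{\circ}$ for every $z\in\bar S$, which is precisely $(a)$.

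For $(b)$, the standing smoothness assumption makes $u^{k}=\nabla_{y}\phi(x^{k},x^{k})$ the only admissible choice and $v^{k}=\bar p$ the only element of $H(P_{\bar S}(x^{k}))$, so there is a single sequence to control, namely $\psi_{k}=\langle\nabla_{y}\phi(x^{k},x^{k})-\bar p,\,d^{k}\rangle$ with $d^{k}=(x^{k}-P_{\bar S}(x^{k}))/\|x^{k}-P_{\bar S}(x^{k})\|$ a unit vector (well defined since $x^{k}\notin\bar S$ for $k\in K$). By Cauchy--Schwarz $|\psi_{k}|\le\|\nabla_{y}\phi(x^{k},x^{k})-\bar p\|$, and the right-hand side tends to $0$ along $K'$; hence $\psi_{k}\to 0$ as $k\in K',\,k\to\infty$. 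As $K'\subseteq K$, this single convergent subsequence forces $\limsup_{k\in K,\,k\to\infty}\psi_{k}\ge 0$, which is $(b)$.

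The argument is short precisely because $(b)$ only asks the $\limsup$ to be nonnegative: it suffices to exhibit one subsequence on which $\psi_{k}\to 0$, and $\bar p$ was manufactured to make exactly the subsequence $K'$ do this. I do not anticipate a genuine obstacle; the only point meriting care is that membership of $-\bar p$ in the interior of the \emph{intersection} $G$ supplies a single radius $\alpha$ valid simultaneously at all $z\in\bar S$ in $(a)$, and this is immediate from $G\subseteq[T_{S}(z)\cap\hat N_{\bar S}(z)]^{\circ}$. The real content of the proposition is thus the \emph{choice} of $H$ rather than any estimate.
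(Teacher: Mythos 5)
Your proposal is correct and follows essentially the same route as the paper's proof: both take $\bar p$ to be an accumulation point of $\{\nabla_y\phi(x^k,x^k)\}_{k\in K}$, define the constant augmented mapping $H(z)\equiv\{\bar p\}$, obtain $(a)$ directly from $-\bar p\in\mathrm{int}\,G$, and obtain $(b)$ by passing to the subsequence along which the gradients converge to $\bar p$, so that $\psi_k\to 0$ there and the $\limsup$ over $K$ is nonnegative. Your explicit Cauchy--Schwarz bound and the treatment of the finite-$K$ case are minor refinements of details the paper leaves implicit.
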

\begin{proof} Let $K=\{k\mid x^{k}\notin \bar{S}\}$ be an infinite
	sequence. According to the hypotheses, there must be an accumulation
	$\bar{p}$ of $\{\nabla_{y}\phi(x^{k},x^{k})\}_{k\in K}$ satisfying
	$-\bar{p}\in \text{int} G$, i.e., there exists a constant $\alpha>0$ such
	that
	\begin{eqnarray}\label{eq3.12}
		\alpha B\subset\bar{p}+[T_{S}(z)\cap\hat{N}_{\bar{S}}(z)]^{\circ},\quad\forall
		z\in\bar{S}.
	\end{eqnarray}
	Letting $H(z)=\bar{p}$, $\forall z\in\bar{S}$, by \eqref{eq3.12} we know that $(a)$ holds in Definition \ref{Definition 3.4}. Furthermore let $K_{0}\subseteq K$ such that
	\begin{eqnarray}\label{eq3.13}
		\lim\limits_{k\in
			K_{0},k\rightarrow\infty}\nabla_{y}\phi(x^{k},x^{k})=\bar{p}.
	\end{eqnarray}
	By \eqref{eq3.13}, we immediately get that
	\begin{eqnarray*}
		\limsup\limits_{k\in K,~k \rightarrow\infty}\psi_{k}&\geq&\limsup\limits_{k\in K_{0},k \rightarrow\infty}\psi_{k}\\
		&=&\lim\limits_{k\in K_{0},k\rightarrow\infty}\frac{1}{\parallel
			x^{k}-P_{\bar{s}}(x^{k})\|}\langle\nabla_{y}\phi(x^{k},x^{k})
		-\bar{p},x^{k}-P_{\bar{S}}(x^{k})\rangle\\
		&=&0,
	\end{eqnarray*}
	i.e., $(b)$ holds in Definition \ref{Definition 3.4}.
\end{proof}

Finally, we give the relation of strong non-degeneracy and augmented
weak sharpness.

\begin{proposition}\label{Proposition 3.6}
	In the $EP(\phi,S)$, suppose $\nabla_{y}\phi(x,x)$ is continuous over $S$, $\{x^{k}\}\subset S$ is bounded and any of its accumulations is strongly non-degenerate. Then $\bar{S}$ is augmented weakly sharp with respect to $\{x^{k}\}$.
\end{proposition}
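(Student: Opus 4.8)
The plan is to construct the augmented mapping $H$ explicitly, exploiting the fact that strong non-degeneracy forces the accumulation points of $\{x^k\}$ to be \emph{isolated} in $\bar S$. Throughout set $K=\{k\mid x^k\notin\bar S\}$; if $K$ is finite the sequence terminates finitely and there is nothing to prove, so assume $K$ is infinite. Since $\{x^k\}$ is bounded, its set $\Omega$ of accumulation points is nonempty, and by hypothesis every $\bar x\in\Omega$ lies in $\bar S$ and satisfies $-\nabla_y\phi(\bar x,\bar x)\in\mathrm{int}\,N_S(\bar x)$.

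First I would prove that each such $\bar x$ is isolated in $\bar S$. Because $\phi(x,\cdot)$ is continuously differentiable (hence locally Lipschitzian), the standing inclusion $\bar S\subseteq\tilde S$ gives, for any solution $z$, the stationarity relation $\langle\nabla_y\phi(z,z),y-z\rangle\ge 0$ for all $y\in S$. If $z_n\in\bar S$, $z_n\to\bar x$, $z_n\ne\bar x$, then taking $y=\bar x$, dividing by $\|z_n-\bar x\|$, and passing to the limit along a convergent subsequence of the unit vectors $d_n=(z_n-\bar x)/\|z_n-\bar x\|\in T_S(\bar x)$ yields $\langle\nabla_y\phi(\bar x,\bar x),d\rangle\le 0$ for some unit $d\in T_S(\bar x)$; but $-\nabla_y\phi(\bar x,\bar x)\in\mathrm{int}\,N_S(\bar x)=\mathrm{int}\,[T_S(\bar x)]^\circ$ forces $\langle\nabla_y\phi(\bar x,\bar x),d\rangle>0$, a contradiction. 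Hence $\Omega$ is a closed, bounded, discrete set, i.e. $\Omega=\{\bar x_1,\dots,\bar x_m\}$ is finite. For each $i$, isolation gives $\hat N_{\bar S}(\bar x_i)=\mathbb R^n$, whence $[T_S(\bar x_i)\cap\hat N_{\bar S}(\bar x_i)]^\circ=[T_S(\bar x_i)]^\circ=N_S(\bar x_i)$, and strong non-degeneracy produces $\alpha_i>0$ with $\alpha_i B\subset\nabla_y\phi(\bar x_i,\bar x_i)+N_S(\bar x_i)$.

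Next I would define the augmented mapping by $H(\bar x_i)=\{\nabla_y\phi(\bar x_i,\bar x_i)\}$ on $\Omega$ and $H(z)=\alpha B$ for $z\in\bar S\setminus\Omega$, where $\alpha=\min_{1\le i\le m}\alpha_i>0$. Condition $(a)$ then holds: at each $\bar x_i$ it follows from $\alpha B\subset\alpha_iB\subset\nabla_y\phi(\bar x_i,\bar x_i)+[T_S(\bar x_i)\cap\hat N_{\bar S}(\bar x_i)]^\circ$, while for $z\notin\Omega$ it is immediate from $\alpha B\subset H(z)\subset H(z)+[T_S(z)\cap\hat N_{\bar S}(z)]^\circ$ (the polar cone contains $0$). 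For condition $(b)$ the decisive observation is that projections stabilize: fix $i$ and a subsequence $K_i\subseteq K$ with $x^k\to\bar x_i$. Since $\bar x_i$ is isolated, there is $r_i>0$ with $\mathrm{dist}(\bar x_i,\bar S\setminus\{\bar x_i\})\ge r_i$, so once $\|x^k-\bar x_i\|<r_i/2$ the unique nearest point of $\bar S$ to $x^k$ is $\bar x_i$; thus $P_{\bar S}(x^k)=\bar x_i$ and $H(P_{\bar S}(x^k))=\{\nabla_y\phi(\bar x_i,\bar x_i)\}$ for all large $k\in K_i$. For the forced selections $u^k=\nabla_y\phi(x^k,x^k)$ and $v^k=\nabla_y\phi(\bar x_i,\bar x_i)$, continuity of $\nabla_y\phi(\cdot,\cdot)$ gives $u^k-v^k\to 0$, and the Cauchy--Schwarz bound $|\psi_k|\le\|u^k-v^k\|$ yields $\psi_k\to 0$ along $K_i$. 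Consequently $\limsup_{k\in K}\psi_k\ge\lim_{k\in K_i}\psi_k=0$ for every admissible selection, which is exactly $(b)$.

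The main obstacle is the interplay between $(a)$ and $(b)$: condition $(a)$ pushes $H(z)$ to be large, while $(b)$, quantifying over all $v^k\in H(P_{\bar S}(x^k))$, pushes it to be small, and strong non-degeneracy is assumed only at the accumulation points, not on all of $\bar S$. The isolation argument is what resolves this tension, for it both makes $\Omega$ finite (so a single modulus $\alpha$ survives) and guarantees that the projections $P_{\bar S}(x^k)$ eventually coincide with the isolated accumulation points, so that on the subsequence relevant to the $\limsup$ the mapping $H$ is the harmless singleton $\{\nabla_y\phi(\bar x_i,\bar x_i)\}$ and the potentially dangerous ball values $\alpha B$ are never seen. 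Verifying the isolation of strongly non-degenerate solutions and the projection-stabilization step is therefore the crux; the remaining estimates are routine continuity arguments.
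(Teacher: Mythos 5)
Your proof is correct, and its skeleton is the same as the paper's: isolate the strongly non-degenerate accumulation point(s) in $\bar S$, use isolation to get $[T_S(\bar x)\cap\hat N_{\bar S}(\bar x)]^{\circ}=T_S(\bar x)^{\circ}=N_S(\bar x)$ so that strong non-degeneracy yields condition $(a)$ at those points, take the augmented mapping equal to the gradient there and trivially large elsewhere, and deduce condition $(b)$ from projection stabilization plus continuity along a single convergent subsequence of $K$. Two genuine differences are worth recording. First, the paper does not prove the isolation step at all: it invokes [\cite{wang2008two}, Proposition 5.1], whereas you derive it directly from stationarity ($\langle\nabla_y\phi(z,z),y-z\rangle\ge 0$ for $z\in\bar S$, valid here because smoothness of $\phi(x,\cdot)$ and convexity of $S$ give $\bar S\subseteq\tilde S$ automatically) combined with the strict inequality $\langle\nabla_y\phi(\bar x,\bar x),d\rangle>0$ on nonzero tangent directions forced by $-\nabla_y\phi(\bar x,\bar x)\in\mathrm{int}\,T_S(\bar x)^{\circ}$; this makes your argument self-contained where the paper's is not. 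Second, the paper fixes one accumulation point $\bar x$ of $\{x^k\}_{k\in K}$ and sets $H(z)=\mathbb{R}^n$ for every $z\ne\bar x$, while you treat the whole (compact, discrete, hence finite) set $\Omega$ of accumulation points, take a uniform modulus $\alpha=\min_i\alpha_i$, and put $H=\alpha B$ off $\Omega$. Since $(b)$ is a $\limsup$ over $K$ — one well-behaved subsequence suffices — and $(a)$ is trivial wherever $H$ is huge, the single-point construction already works; your finiteness argument is a correct but unnecessary detour, whose payoff is a bounded augmented mapping and the slightly stronger conclusion that $\psi_k\to 0$ along every subsequence of $K$ converging into $\Omega$, rather than along just one.
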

\begin{proof}Let $K=\{k\mid x^{k}\notin\bar{S}\}$ be an infinite
	sequence. Then there must be an accumulation
	point $\bar{x}$ of $\{x^{k}\}_{k\in K}$. Suppose $K_{0}\subseteq K$ such that
	\begin{eqnarray}\label{eq3.14}
		\lim\limits_{k\in K_{0},k\rightarrow\infty}x^{k}=\bar{x}.
	\end{eqnarray}
	By the assumptions of the strong non-degeneracy of
	$\bar{x}$ and the continuity of $\nabla_{y}\phi(x,x)$, and [\cite{wang2008two}, Proposition 5.1], we know that $\bar{x}$ is an isolated point of $\bar{S}$, Thus we have $T_{\bar{S}}(\bar{x})=\{0\},\hat{N}_{\bar{S}}(\bar{x})=T_{\bar{S}}\bar{x})^{\circ}=\mathbb{R}^{n}$. Therefore, we obtain that
	\begin{eqnarray}\label{eq3.15}
		[T_{S}(\bar{x})\cap\hat{N}_{\bar{S}}(\bar{x})]^{\circ}=T_{S}(\bar{x})^{\circ}=N_{S}(\bar{x}).
	\end{eqnarray}
	According to \eqref{eq3.5} and \eqref{eq3.15}, we know that there exists a
	constant $\alpha>0$ such that
	\begin{eqnarray}\label{eq3.16}
		\alpha B\subset\nabla_{y}\phi(\bar{x},\bar{x})+N_{S}(\bar{x})=\nabla_{y}\phi(\bar{x},
		\bar{x})+[T_{S}(\bar{x})\cap\hat{N}_{\bar{S}}(\bar{x})]^{\circ}.
	\end{eqnarray}

	Now we define the augmented mapping
	\begin{eqnarray}\label{eq3.17}
		H(z)=\left\{\begin{array}{ll}
			\nabla_{y}\phi(\bar{x},\bar{x}),\qquad z=\bar{x},\\
			\mathbb{R}^{n},\qquad z\neq\bar{x}.\\
		\end{array} \right.
	\end{eqnarray}
	According to \eqref{eq3.16} and \eqref{eq3.17}, we can immediately obtain that
	$$\alpha B\subset
	H(z)+[T_{S}(z)\cap\hat{N}_{\bar{S}}(z)]^{\circ}, \forall
	z\in\bar{S},$$
	i.e., $(a)$ holds in Definition \ref{Definition 3.4}. Since $x$ is an isolated point of $\bar{S}$, by \eqref{eq3.14}, for all sufficiently large $k\in K_{0}$ it holds that $P_{\bar{S}}(x^{k})=\bar{x}$. Therefore, according to \eqref{eq3.14}, \eqref{eq3.17}, and the continuity of $\nabla_{y}\phi(x,x)$, we obtain that
	\begin{eqnarray*}
		\limsup\limits_{k\in K,~k \rightarrow\infty}\psi_{k}&\geq&\limsup\limits_{k\in K_{0},k \rightarrow\infty}\psi_{k}\\
		&=&\lim\limits_{k\in K_{0},k\rightarrow\infty}\frac{1}{\parallel
			x^{k}-\bar{x}\|}\langle\nabla_{y}\phi(x^{k},x^{k})-\nabla_{y}\phi(\bar{x},
		\bar{x}),x^{k}-\bar{x}\rangle\\
		&=&0
	\end{eqnarray*}
	i.e., $(b)$ holds in Definition \ref{Definition 3.4}.
\end{proof}

\section{Some examples}

In this section, we give some examples of $EP(\phi,S)$ to show that
the solution set $S$ of $EP(\phi,S)$ does not satisfy the weak
sharpness but the augmented weak sharpness.

\begin{example}\label{Example 4.1}
	Consider the following mathematical programming problems $(MP)$ as a special case of $EP(\phi,S)$:
	$$\min\limits_{x\in S} f(x)=\max\{0,x_{1}x_{2}\}+\sum\limits_{i=1}^{2}\max\{0,x_{i}\},$$
	where
	$$S=\{x\in \mathbb{R}^{2}\mid x_{1}\leq1,~x_{2}\leq1\},$$
	$$\bar{S}=\{x\in \mathbb{R}^{2}\mid x_{1}=0,~x_{2}\leq0\}\cup\{x\in \mathbb{R}^{2}\mid x_{1}\leq0,~x_{2}=0\}.$$
	This is a non-smooth and non-convex programming problem, and the
	solution set $\bar{S}$ of it is non-convex. By Example \ref{Example 2.1}, we know that
	$$\phi(x,y)=f(y)-f(x),\quad (x,y)\in \mathbb{R}^{2}\times \mathbb{R}^{2}.$$
	Therefore, $\partial_{y} \phi(x,\cdot)=\partial
	f(\cdot).$ Now, let $(t)^{+}=\max\{0,t\}.$
	
	When $x\in S\setminus \bar{S}$, we have
	\begin{eqnarray}\label{eq4.1}
		\partial f(x_{1},x_{2})=\left\{\begin{array}{ll}
			(x_{2}+(\frac{x_{1}}{|x_{1}|})^{+},x_{1}+(\frac{x_{2}}{|x_{2}|})^{+}),\quad &\text{if}\quad x_{1}x_{2}>0\\
			((\frac{x_{1}}{|x_{1}|})^{+},(\frac{x_{2}}{|x_{2}|})^{+}),\quad &\text{if}\quad x_{1}x_{2}< 0\\
			\{u\in \mathbb{R}^{2}\mid u_{1}\in[\frac{x_{1}}{\|x\|},x_{2}+1],\quad &\text{if}\quad x_{1}x_{2}=0\\
			\qquad\qquad ~~ u_{2}\in[\frac{x_{2}}{\|x\|},x_{1}+1]\},&\qquad\text{and} \quad x_{1}+x_{2}>0.
		\end{array} \right.
	\end{eqnarray}

	When $x\in\bar{S}$, we have
	\begin{eqnarray}\label{eq4.2}
		\partial f(x_{1},x_{2})=\left\{\begin{array}{ll}
			\{u\in \mathbb{R}^{2}\mid u_{1}\in[0,1],u_{2}\in[0,1]\},\quad &\text{if}\quad x=(0,0)\\
			\{u\in \mathbb{R}^{2}\mid u_{1}\in[x_{2},1],u_{2}=0\},\quad &\text{if}\quad x_{1}=0,~x_{2}<0.\\
			\{u\in
			\mathbb{R}^{2}\mid u_{1}=0,u_{2}\in[x_{1},1]\},\quad &\text{if}\quad x_{1}<0,~x_{2}=0.\end{array} \right.
	\end{eqnarray}
	
	Note that $\bar{S}\subset \text{int}S$, for all $x\in\bar{S}$, we have
	$T_{S}(x)=\mathbb{R}^{2}$. So, for $\forall x\in \bar{S}$, we get that
	\begin{eqnarray}\label{eq4.3}
		[T_{S}(x)\cap \hat{N}_{\bar{S}}(x)]^{\circ}&=&\hat{N}_{\bar{S}}(x)^{\circ}\nonumber\\
		&=&\left\{\begin{array}{ll}
			\{\xi\in \mathbb{R}^{2}\mid \xi_{1}\leq0,\xi_{2}\leq0\},\quad &\text{if}\quad x=(0,0)\\
			\{\xi\in \mathbb{R}^{2}\mid \xi_{1}=0,\xi_{2}\in(-\infty,+\infty)\},
			\quad&\text{if}\quad x_{1}=0,~x_{2}<0\\
			\{\xi\in \mathbb{R}^{2}\mid \xi_{1}\in(-\infty,+\infty),\xi_{2}=0\},\quad&\text{if}\quad x_{1}<0,~x_{2}=0
		\end{array} \right.
	\end{eqnarray}
	
	By \eqref{eq4.2}, the second and third formula of \eqref{eq4.3}, we obtain that
	\begin{eqnarray}\label{eq4.4}
		\partial f(x)+\hat{N}_{\bar{S}}(x)^{\circ}=\left\{\begin{array}{ll}
			\{\xi\in \mathbb{R}^{2}\mid \xi_{1}\in[x_{2},1],\xi_{2}\in(-\infty,+\infty)\},\quad &
			\text{if}\quad x_{1}=0,~x_{2}<0\\
			\{\xi\in \mathbb{R}^{2}\mid \xi_{1}\in(-\infty,+\infty),\xi_{2}\in[x_{1},1]\},
			\quad &\text{if}\quad x_{1}<0,~x_{2}=0.\end{array} \right.
	\end{eqnarray}
	
	By \eqref{eq4.4} we know that the model $\alpha >0$ is not a constant in Definition \ref{Definition 3.2}, which is related to $x\in\bar{S}$, and when
	$\|x\| \rightarrow 0$, $\alpha\rightarrow 0^{+}$, i.e., the constant
	$\alpha >0$ does not exist. Therefore, $\bar{S}$ is not a weak sharp set.
	
	Now, take small enough $\varepsilon>0$, and let
	$$\Omega_{\varepsilon}=\{x\in \mathbb{R}^{2}|-\varepsilon
	<x_{1}<0,~-\varepsilon<x_{2}<0\}.$$
	
	Below we will prove that $\bar{S}$ is an augmented weak sharp set
	with respect to arbitrary sequences $\{x^{k}\}\subset
	S\setminus\Omega_{\varepsilon}$. Let $K=\{k\mid x^{k}\notin\bar{S}\}$ be
	an infinite sequence. We introduce the augmented mapping
	$H:\bar{S}\Rightarrow \mathbb{R}^{2}$ as follows:
	\begin{eqnarray}\label{eq4.5}
		H(x_{1},x_{2})=\left\{\begin{array}{ll}
			(\varepsilon,\varepsilon),\quad &\text{if}\quad x=(0,0)\\
			\{v\in \mathbb{R}^{2}||v_{1}\mid \leq\varepsilon,v_{2}=0\},\quad &\text{if}\quad x_{1}=0,~x_{2}<0.\\
			\{v\in \mathbb{R}^{2}|v_{1}=0,|v_{2}\mid \leq\varepsilon\},\quad &\text{if}\quad x_{1}<0,~x_{2}=0.\end{array} \right.
	\end{eqnarray}
	
	According to \eqref{eq4.3}, \eqref{eq4.5}, and the conditions $(a)$ in Definition \ref{Definition 3.4} holds, i.e., there exists a constant $\alpha
	>0$ such that for all $x\in\bar{S}$,
	$$\alpha B\subset H(x)+\hat{N}_{\bar{S}}(x)^{\circ}.$$
	
	For $k\in K$, let
	$$\psi_{k}=\frac{1}{\|x^{k}-P_{\bar{S}}(x^{k})\|}\langle
	u^{k}-v^{k}, x^{k}-P_{\bar{S}}(x^{k})\rangle,$$
	where $u^{k}\in
	\partial f(x^{k}),v^{k}\in H(P_{\bar{S}}(x^{k}))$, and
	\begin{eqnarray}\label{eq4.6}
		P_{\bar{S}}(x^{k})=\left\{\begin{array}{ll}
			(0,0),\quad &\text{if}\quad x_{1}^{k}\geq0,~x_{2}^{k}\geq 0,\quad\text{and}\quad x_{1}^{k}+x_{2}^{k}>0\\
			(0,x_{2}^{k}),\quad &\text{if}\quad x_{1}^{k}>0,~x_{2}^{k}<0,\quad\text{or}\quad x_{2}^{k}\leq x_{1}^{k}<0\\
			(x_{1}^{k},0),\quad &\text{if}\quad x_{1}^{k}<0,~x_{2}^{k}>0,\quad\text{or}\quad x_{1}^{k}\leq x_{2}^{k}<0.\end{array} \right. 
	\end{eqnarray}
	
	According to \eqref{eq4.1}, \eqref{eq4.5}, and \eqref{eq4.6}, we can easily prove that
	$\{x^{k}\}_{k\in K}\subset S\backslash \Omega_{\varepsilon}$
	satisfies the condition $(b)$ in Definition \ref{Definition 3.4}. For simplicity,
	we only prove the case that $\{x^{k}\}_{k\in K}$ lies in the third
	quadrant. At the same time, considering $x^{k}\notin
	\Omega_{\varepsilon}$, we have
	
	$(i)$ when $x_{2}^{k}\leq x_{1}^{k}<0$ and $x_{2}^{k}\leq
	-\varepsilon$, by the first item of \eqref{eq4.1}, \eqref{eq4.6}, and the second
	item of \eqref{eq4.5}, we have
	$$\psi_{k}=\frac{1}{|x_{1}^{k}|}(x_{2}^{k}-v_{1}^{k})x_{1}^{k}=|x_{2}^{k}|+
	v_{1}^{k}\geq\varepsilon-|v_{1}^{k}\mid \geq0,$$
	
	$(ii)$ when $x_{1}^{k}\leq x_{2}^{k}<0$ and $x_{1}^{k}\leq
	-\varepsilon$, by the first item of \eqref{eq4.1}, \eqref{eq4.6}, and the third
	item of \eqref{eq4.5}, we have
	$$\psi_{k}=\frac{1}{|x_{2}^{k}|}(x_{1}^{k}-v_{2}^{k})x_{2}^{k}=|x_{1}^{k}|+v_{2}^{k}
	\geq\varepsilon-|v_{2}^{k}\mid \geq0.$$

	By $(i),(ii)$, we get that
	$$\limsup\limits_{k\in K,~k
		\rightarrow\infty}\psi_{k}\geq0,$$
	i.e., $(b)$ in Definition \ref{Definition 3.4} holds.
	
	In addition, we note that Remark \ref{Remark 3.1} is verified through Example \ref{Example 4.1}. As in this example, the regular normal cone of $\bar{S}$ at $(0,0)$ is
	$$\hat{N}_{\bar{S}}(0,0)=\{\xi\in
	\mathbb{R}^{2}\mid \xi_{1}\geq0,\xi_{2}\geq0\},$$
	while the normal cone under the general meaning is
	$$N_{\bar{S}}(0,0)
	=\hat{N}_{\bar{S}}(0,0)\cup\{\xi\in \mathbb{R}^{2}\mid \xi_{1}
	\in(-\infty,0),\xi_{2}=0\} \cup\{\xi\in
	\mathbb{R}^{2}\mid \xi_{1}=0,\xi_{2}\in(-\infty,0)\}.$$
	Here, $N_{\bar{S}}(0,0)$ is a closed cone, but not a convex cone. Furthermore, according to $T_{S}(0,0)=\mathbb{R}^{2}$, it follows that
	$$[T_{S}(0,0)\cap\hat{N}_{\bar{S}}(0,0)]^{\circ}=\hat{N}_{\bar{S}}
	(0,0)^{\circ}=\{\xi\in k^{2}\mid \xi_{1}\leq0,\xi_{2}\leq0\},$$
	$$[T_{S}(0,0)\cap N_{\bar{S}}(0,0)]^{\circ}=N_{\bar{S}}(0,0)^{\circ}=\{(0,0)\}.$$
	The advantage of the regular cone has been shown here compared with
	the normal cone under the general meaning.
\end{example}
\begin{example}\label{Example 4.2}
	Consider the following variational inequality problem $VIP(F,S)$ as a special case of $EP(\phi,S)$:
	$$F(x)=(\cos x_{1},e^{x_{2}}),$$
	where $S=\{x\in \mathbb{R}^{2}|0\leq x_{1}\leq
	\frac{\pi}{2},x_{2}\geq0\}$, $\bar{S}=\{(0,0),(\frac{\pi}{2},0)\}$. By Example \ref{Example 2.2}, we have $\phi(x,y)=\langle F(x),y-x \rangle,
	(x,y)\in \mathbb{R}^{2}\times \mathbb{R}^{2}$. Then, we have $\partial_{y}\phi(x,\cdot)=F(x)$.
	
	When $x\in\bar{S}$, we have
	\begin{eqnarray}\label{eq4.7}
		F(x)=\left\{\begin{array}{ll}
			(1,1),\qquad x=(0,0)\\
			(0,1),\qquad x=(\frac{\pi}{2},0),\end{array} \right.
	\end{eqnarray}
	and
	\begin{eqnarray}\label{eq4.8}
		[T_{S}(x)\cap N_{\bar{S}}(x)]^{\circ}=N_{S}(x)=\left\{\begin{array}{ll}
			\{\xi\in \mathbb{R}^{2}\mid \xi_{1}\leq0,\xi_{2}\leq 0\},
			\quad x=(0,0)\\
			\{\xi\in \mathbb{R}^{2}\mid \xi_{1}\geq0,\xi_{2}\leq 0\},\quad x=(\frac{\pi}{2},0).\end{array} \right.
	\end{eqnarray}
	
	This is a non-monotonic variational inequality problem. By
	\eqref{eq4.7} and \eqref{eq4.8}, one can see that $\bar{S}$ is not a weak sharp set.
	
	Next, we will prove that $\bar{S}$ is an augmented weak sharp set
	with respect to the sequence $\{x^{k}\}\subset S$ which satisfies
	the following conditions:
	
	$(i)~\{x^{k}\}\subset\{x\in
	\mathbb{R}^{2}\mid x_{1}\leq\frac{\pi}{4}\}\cup\{x\in
	\mathbb{R}^{2}\mid x_{1}\geq\frac{\pi}{4},x_{1}+x_{2}\geq\frac{\pi}{2}\};$
	
	$(ii)~\lim\limits_{k\rightarrow\infty}\text{dist}(x^{k},\bar{S})=0.$
	
	For this purpose, let $K=\{k\mid x^{k}\notin \bar{S}\}$ be an infinite
	sequence. Take $\lambda\in(0,\frac{1}{2})$, we introduce the
	augmented mapping $H:\bar{S}\rightarrow \mathbb{R}^{2}$ as follows:
	\begin{eqnarray}\label{eq4.9}
		H(x)=\left\{\begin{array}{ll}
			(\lambda,\lambda),\quad&\text{if}\quad x=(0,0)\\
			(-\lambda,\lambda),\quad&\text{if}\quad x=(\frac{\pi}{2},0).
		\end{array} \right.
	\end{eqnarray}
	
	By \eqref{eq4.8} and \eqref{eq4.9}, we obtain that the condition $(a)$ in Definition \ref{Definition 3.4} holds. Now, for $k\in K$, let
	$$\psi_{k}=\frac{1}{\|x^{k}-P_{\bar{S}}(x^{k})\|}\langle F(x^{k})-
	H(P_{\bar{S}}(x^{k})),x^{k}-P_{\bar{S}}(x^{k})\rangle,$$
	where
	\begin{eqnarray}\label{eq4.10}
		P_{\bar{S}}(x^{k})=\left\{\begin{array}{ll}
			(0,0),\quad&\text{if}\quad x_{1}^{k}\leq\frac{\pi}{4}\\
			(\frac{\pi}{2},0),\quad&\text{if}\quad x_{1}^{k}\geq\frac{\pi}{4}.\end{array} \right.
	\end{eqnarray}
	
	By the condition $(ii)$, the accumulations of the bounded sequence
	$\{x^{k}\}_{k\in K}$ are only possibly $\bar{x}=(0,0)$ or
	$\bar{x}=(\frac{\pi}{2},0)$. Without loss of generality, let
	$\bar{x}=(\frac{\pi}{2},0)$ be one of its accumulations. Then there
	exists an infinite subsequence $K_{0}\subseteq K$ such that
	\begin{eqnarray}\label{eq4.11}
		\lim\limits_{k\in K_{0},k\rightarrow\infty}x^{k}=(\frac{\pi}{2},0).
	\end{eqnarray}
	
	According to \eqref{eq4.9}, \eqref{eq4.10}, and \eqref{eq4.11}, when $k\in K_{0}$ is big
	enough, we obtain that
	\begin{eqnarray*}
		\psi_{k}&=&\frac{1}{\|x^{k}-\bar{x}\|}[(\cos x_{1}^{k}+\lambda)(x_{1}^{k}-\frac{\pi}{2})+(e^{x_{2}^{k}}-\lambda)x_{2}^{k}]\\
		&\geq&\frac{1}{\|x^{k}-\bar{x}\|}[\cos x_{1}^{k}(x_{1}^{k}-\frac{\pi}{2})+(e^{x_{2}^{k}}-2\lambda)x_{2}^{k}]\qquad [by\quad (i)]\\
		&\geq&\frac{1}{\|x^{k}-\bar{x}\|}(x_{1}^{k}-\frac{\pi}{2})\cos
		x_{1}^{k}\qquad [by~
		\lambda\in(0,\frac{1}{2})]
	\end{eqnarray*}
	Furthermore, by \eqref{eq4.11}, we immediately get that $$\limsup\limits_{k\in
		K,~k\rightarrow\infty}\psi_{k}\geq\lim\limits_{k\in
		K_{0},k\rightarrow\infty}\frac{1}{\|x^{k}-\bar{x}\|}(x_{1}^{k}-\frac{\pi}{2})\cos
	x_{1}^{k}=0,$$ i.e., $(b)$ in Definition \ref{Definition 3.4} holds.
\end{example}
\begin{example}\label{Example 4.3}
	Consider $EP(\phi,S)$, where
	$$\phi(x,y)=(e^{y_{1}^{2}}+x_{2}^{2})-(e^{x_{1}^{2}}+y_{2}^{2}),$$
	$S=S_{1}\times S_{2}=[0,1]\times[-1,1]$, and $\bar{S}=\{(0,1),(0,-1)\}.$
	
	By Example \ref{Example 2.3}, one can see that this is a saddle point problem $(SPP)$, i.e., find $\bar{x}=(\bar{x_{1}},\bar{x_{2}})\in
	S_{1}\times S_{2}$ such that
	$$\varphi(\bar{x}_{1},y_{2})\leq\varphi(\bar{x}_{1},\bar{x}_{2})\leq\varphi(y_{1},\bar{x}_{2}),\quad \forall(y_{1},y_{2})\in S_{1}\times S_{2},$$
	where $\varphi(x_{1},x_{2})=e^{x_{1}^{2}}+x_{2}^{2}$. It can be easily seen that
	\begin{eqnarray}\label{eq4.12}
		\nabla_{y}\phi(x,x)=(2x_{1}e^{x_{1}^{2}},-2x_{2}),
	\end{eqnarray}
	and
	\begin{eqnarray}\label{eq4.13}
		\nabla_{y}\phi(x,x)=\left\{\begin{array}{ll}
			(0,-2),\quad &\text{if}\quad x=(0,1)\\
			(0,2),\quad&\text{if}\quad x=(0,-1).
		\end{array} \right.
	\end{eqnarray}
	
	When $x\in \bar{S}$, we have
	\begin{eqnarray}\label{eq4.14}
		[T_{S}(x)\cap N_{\bar{S}}(x)]^{\circ}=N_{S}(x)=\left\{\begin{array}{ll}
			\{\xi\in \mathbb{R}^{2}\mid \xi_{1}\leq0,\xi_{2}\geq 0\},\quad &\text{if}\quad x=(0,1)\\
			\{\xi\in \mathbb{R}^{2}\mid \xi_{1}\leq0,\xi_{2}\leq 0\}.\quad &\text{if}\quad x=(0,-1).
		\end{array} \right.
	\end{eqnarray}
	
	By \eqref{eq4.13} and \eqref{eq4.14}, one can see that $\bar{S}$ is not a weak sharp set and that $\bar{S}$ is not a strongly non-degenerate set.
	
	Next, we will prove that $\bar{S}$ is an augmented weak sharp set
	with respect to the sequence $\{x^{k}\}\subset S$ which satisfies
	the following conditions:
	
	$(i)~\{x^{k}\}\subset\{x\in \mathbb{R}^{2}\mid x_{1}+|x_{2}| \leq 1\};$
	
	$(ii)~\lim\limits_{k\rightarrow\infty}\text{dist}(x^{k},\bar{S})=0.$
	
	For this purpose, let $K=\{k\mid x^{k}\notin \bar{S}\}$ be an infinite
	sequence. Take $\lambda\in(0,1)$, we introduce the augmented mapping
	$H:\bar{S}\rightarrow \mathbb{R}^{2}$ as follows:
	\begin{eqnarray}\label{eq4.15}
		H(x)=\left\{\begin{array}{ll}
			(\lambda,-\lambda),\quad&\text{if}\quad x=(0,1)\\
			(\lambda,\lambda),\quad&\text{if}\quad x=(0,-1).
		\end{array} \right.
	\end{eqnarray}
	
	By \eqref{eq4.14} and \eqref{eq4.15}, we obtain that the condition $(a)$ in Definition \ref{Definition 3.4} holds. Now we will prove the condition $(b)$ in Definition \ref{Definition 3.4} also holds. By $(ii)$ one can see that the accumulations of $\{x^{k}\}$ are $\bar{x}=(0,1)$ or $\bar{x}=(0,-1)$. Without loss of generality, assume that there exists a sequence $k_{0}\subseteq K$ such that
	\begin{eqnarray}\label{eq4.16}
		\lim\limits_{K\in K_{0},k\rightarrow\infty}x^{k}=(0,1).
	\end{eqnarray}
	
	For $k\in K$, let
	$$\psi_{k}=\frac{1}{\parallel x^{k}-\bar{x}\|}\langle\nabla_{y}\phi(x^{k},x^{k})-H(P_{\bar{S}}(x^{k})),x^{k}-P_{\bar{S}}(x^{k})\rangle,$$
	where
	\begin{eqnarray}\label{eq4.17}
		P_{\bar{S}}(x^{k})=\left\{\begin{array}{ll}
			(0,1),\quad &\text{if}\quad x^{k}\geq 0\\
			(0,-1),\quad &\text{if}\quad x_{2}^{k}\leq 0.
		\end{array} \right.
	\end{eqnarray}
	
	According to \eqref{eq4.12}, \eqref{eq4.15}, \eqref{eq4.16}, and \eqref{eq4.17}, when $k\in K_{0}$
	is big enough, we obtain that
	\begin{eqnarray*}
		\psi_{k}&=&\frac{1}{\|x^{k}-\bar{x}\|}[(2x_{1}^{k}e^{(x_{1}^{k})^{2}}-\lambda)x_{1}^{k}+(\lambda-2x_{2}^{k})(x_{2}^{k}-1)]\\
		&\geq&\frac{1}{\|x^{k}-\bar{x}\|}[(2x_{2}^{k}-\lambda)(1-x_{2}^{k})-\lambda x_{1}^{k}]\\
		&\geq&\frac{1}{\|x^{k}-\bar{x}\|}[(2x_{2}^{k}-\lambda)(1-x_{2}^{k})-\lambda (1-x_{2}^{k})]\quad (by~ (i))\\
		&=&\frac{1}{\|x^{k}-\bar{x}\|}2(x_{2}^{k}-\lambda)(1-x_{2}^{k})\geq
		0\qquad (by~ \lambda\in(0,1))
	\end{eqnarray*}
	Thus we have
	$$\limsup\limits_{k\in K,~k\rightarrow\infty}\psi_{k}\geq \lim\limits_{k\in K_{0},k\rightarrow\infty}
	\frac{1}{\|x^{k}-\bar{x}\|}2(x_{2}^{k}-\lambda)(1-x_{2}^{k})\geq
	0,$$
	i.e., $(b)$ in Definition \ref{Definition 3.4} holds.
\end{example}
\begin{example}\label{Example 4.4}
	Considering $EP(\phi,S)$, where
	$$\phi(x,y)=(e^{y_{1}-x_{2}}-e^{x_{2}-y_{1}}-e^{x_{1}-x_{2}}+e^{x_{2}-x_{1}})+(e^{y_{2}^{2}-x_{1}^{2}}-e^{x_{2}^{2}-x_{1}^{2}}),$$
	$S=S_{1}\times S_{2}=[0,1]\times[0,1]$, $\bar{S}=\{(0,0)\},\quad \bar{x}=(0,0).$
	By Example \ref{Example 2.4}, one can see that this is a $Nash$ equilibrium
	problem $(NEP)$, i.e., find $\bar{x}\in S$ such that
	$$f_{1}(\bar{x})\leq f_{1}(y_{1},\bar{x}_{2}),\quad \forall y_{1}\in
	S_{1},$$
	$$f_{2}(\bar{x})\leq f_{2}(\bar{x}_{1},y_{2}),\quad \forall y_{2}\in S_{2},$$
	where
	$f_{1}(x)=e^{x_{1}-x_{2}}-e^{x_{2}-x_{1}},~f_{2}(x)=e^{x_{2}^{2}-x_{1}^{2}}-1.$
	It is easy to see that
	\begin{eqnarray}
		\nabla_{y}\phi(x,x)&=&(e^{x_{1}-x_{2}}+e^{x_{2}-x_{1}},2x_{2}e^{x_{2}^{2}-x_{1}^{2}}),\label{eq4.18}\\
		\nabla_{y}\phi(0,0)&=&(2,0),\label{eq4.19}\\
		{[T_{S}(0,0)\cap N_{\bar{S}}(0,0)]}^{\circ}&=&N_{S}(0,0)
		=\{\xi\in \mathbb{R}^{2}\mid \xi_{1}\leq0,~\xi_{2}\leq 0\}.\label{eq4.20}
	\end{eqnarray}
	
	By \eqref{eq4.19} and \eqref{eq4.20}, we obtain that $\bar{S}$ is not a weak sharp
	set, i.e., the point $(0,0)$ is not a strongly non-degenerate point.
	
	Now we will prove that $\bar{S}$ is an augmented weak sharp set for arbitrary sequences $\{x^{k}\}\subset S\cap\{x\in
	\mathbb{R}^{2}\mid x_{2}\leq x_{1}\}$. For this purpose, let
	$K=\{k\mid x^{k}\notin\bar{S}\}$ be an infinite sequence. Take
	$\lambda\in(0,\frac{1}{2})$, we introduce the augmented mapping
	$H:\bar{S}\rightarrow \mathbb{R}^{2}$ as follows:
	\begin{eqnarray}\label{eq4.21}
		H(0,0)=(\lambda,\lambda).
	\end{eqnarray}
	
	By \eqref{eq4.20} and \eqref{eq4.21}, one can see that the condition $(a)$ in Definition \ref{Definition 3.4} holds. Furthermore, according to \eqref{eq4.18} and \eqref{eq4.21}, for $k\in K$, we obtain that
	\begin{eqnarray*}
		\psi_{k}&:=&\frac{1}{\|x^{k}\|}\langle\nabla_{y}\phi(x^{k},x^{k})-H(0,0),x^{k}\rangle\\
		&=&\frac{1}{\|x^{k}\|}[(e^{x_{1}^{k}-x_{2}^{k}}+e^{x_{2}^{k}-x_{1}^{k}}-\lambda)x_{1}^{k}
		+(2x_{1}^{k}e^{(x_{2}^{k})^{2}-(x_{1}^{k})^{2}}-\lambda)x_{2}^{k}]\\
		&\geq&\frac{1}{\|x^{k}\|}[(e^{x_{1}^{k}-x_{2}^{k}}+e^{x_{2}^{k}-x_{1}^{k}}-\lambda)x_{1}^{k}-\lambda x_{2}^{k}]\\
		&\geq&\frac{1}{\|x^{k}\|}(e^{x_{1}^{k}-x_{2}^{k}}-2\lambda)x_{1}^{k}\quad [by\quad x_2^k\leq x_1^k]\\
		&\geq &\frac{1}{\|x^{k}\|}(1-2\lambda)x_{1}^{k}\geq0.\qquad [by\quad \lambda\in (1,\frac{1}{2})]\\
	\end{eqnarray*}
	Thus the condition $(b)$ in Definition \ref{Definition 3.4} also holds.
\end{example}

The following example has certain characteristics.

\begin{example}\label{Example 4.5}
	Considering the following non-convex programming problem $(MP)$:
	$$\min\limits_{x\in S}~f(x)=\sin x_{1}+\max\{0,x_{2}\},$$
	where $S=\{x\in \mathbb{R}^{2}|0\leq x_{1}\leq\frac{\pi}{3},-1\leq x_{2}\}$, $\bar{S}=\{x\in \mathbb{R}^{2}\mid x_{1}=0,-1\leq x_{2}\leq 0\}$, 
	and
	\begin{eqnarray}\label{eq4.22}
		\partial f(x_{1},x_{2})=\left\{\begin{array}{ll}
			(\cos x_{1},1),\quad&\text{if}\quad x_{2}>0\\
			\{u\in \mathbb{R}^{2}\mid u_{1}=\cos x_{1},u_{2}\in[0,1]\},\quad&\text{if}\quad x_{2}=0\\
			(\cos x_{1},0),\quad&\text{if}\quad x_{2}<0.
		\end{array} \right.
	\end{eqnarray}
	Note that $\bar{S}$ is convex, we have
	\begin{eqnarray}\label{eq4.23}
		[T_{S}(x)\cap N_{\bar{S}}(x)]^{\circ}=\left\{\begin{array}{ll}
			\{\xi\in \mathbb{R}^{2}\mid \xi_{1}\leq0,\xi_{2}\leq 0\},\quad&\text{if}\quad x=(0,0)\\
			\{\xi\in \mathbb{R}^{2}\mid \xi_{1}\leq0,~\xi_{2}\in(-\infty,\infty)\}, \quad &\text{if}\quad x_{1}=0,~-1\leq x_{2}<0.
		\end{array} \right.
	\end{eqnarray}
	When $x\in\bar{S}$, from \eqref{eq4.22} one can see
	\begin{eqnarray}\label{eq4.24}
		\partial f(x_{1},x_{2})=\left\{\begin{array}{ll}
			\{u\in \mathbb{R}^{2}\mid u_{1}=1,u_{2}\in[0,1]\},\quad &\text{if}\quad x=(0,0)\\
			(1,0),\quad &\text{if}\quad x_{1}=0,~-1\leq x_{2}<0.
		\end{array} \right.
	\end{eqnarray}
	According to \eqref{eq4.23} and \eqref{eq4.24}, we obtain that $\bar{S}$ is a weak sharp set. Furthermore, if we assume $\{x^{k}\}\subset S$, then
	\begin{eqnarray}\label{eq4.25}
		P_{\bar{S}}(x^{k})=\left\{\begin{array}{ll}
			(0,0),\quad &\text{if}\quad x_{2}^{k}>0\\
			(0,x_{2}^{k}),\quad &\text{if}\quad x_{2}^{k}\leq0.
		\end{array} \right.
	\end{eqnarray}
	
	Define the mapping $H(x)=\partial f(x)$ over $\bar{S}$, and then it
	is easy to prove that $\bar{S}$ is an augmented weak sharp set with
	respect to the sequence $\{x^{k}\}$ which satisfies the condition
	$\lim\limits_{k\rightarrow\infty}dis t(x^{k},\bar{S})=0$.
	
	It is worthwhile to note that by modifying the mapping $\partial
	f(\cdot)$ we can prove $\bar{S}$ is augmented weak sharp with
	respect to arbitrary $\{x^{k}\}\subset S$. For this purpose, let
	$K=\{k\mid x^{k}\notin \bar{S}\}$ be an infinite sequence. Taking
	$\lambda\in(0,\cos\frac{\pi}{3}]$, we introduce the augmented
	mapping $H:\bar{S}\rightarrow \mathbb{R}^{2}$ as follows:
	\begin{eqnarray}\label{eq4.26}
		H(x)=\left\{\begin{array}{ll}
			(\lambda,\lambda),\quad &\text{if}\quad x=(0,0)\\
			(\lambda,0),\quad &\text{if}\quad x_{1}=0,~-1\leq x_{2}<0.\end{array} \right.
	\end{eqnarray}
	
	According to \eqref{eq4.23} and \eqref{eq4.26}, one can see that the condition $(a)$ in Definition \ref{Definition 3.4} holds. Furthermore, for $k\in K$, let
	$$\psi_{k}=\frac{1}{\|x^{k}-P_{\bar{S}}(x^{k})\|}\langle u^{k}-v^{k},x^{k}-P_{\bar{S}}(x^{k})\rangle,$$
	where $u^{k}\in \partial f(x^{k}),v^{k}\in H(P_{\bar{S}}(x^{k}))$.
	By \eqref{eq4.22}, \eqref{eq4.25}, and \eqref{eq4.26}, We immediately get that
	\begin{eqnarray*}
		\psi_{k}=\left\{\begin{array}{ll}
			\frac{1}{\|x^{k}\|}[(\cos x_{1}^{k}-\lambda)x_{1}^{k}+(1-\lambda)x_{2}^{k}]\geq0,\quad&\text{if}\quad x_{1}^{k}\geq0,~x_{2}^{k}>0\\
			\frac{1}{|x_1^{k}|}(\cos x_{1}^{k}-\lambda)x_{1}^{k}\geq0,\quad&\text{if}\quad x_{1}^{k}>0,~x_{2}^{k}\leq0 \end{array} \right.(\text{by~ the~ difinition~ of}~ \lambda).
	\end{eqnarray*}
	Thus the condition $(b)$ in Definition \ref{Definition 3.4} also holds.
\end{example}

The following are some examples similar to the above examples.

1.
\begin{eqnarray*}
	EP(\phi,S) \quad
	\begin{array}{ll}
		\phi(x,y)=x_{1}^{2}y_{1}^{2}+x_{2}^{2}y_{2}^{2}-x_{1}^{4}-y_{2}^{4},\\
		S=\{x\in \mathbb{R}^{2}\mid 0\leq x_{1}\leq
		1,~-1\leq x_{2}\leq 1\},\\
		\bar{S}=\{(0,1),~(0,-1)\}.
	\end{array}
\end{eqnarray*}

2.
\begin{eqnarray*}
	EP(\phi,S) \quad
	\begin{array}{ll}
		\phi(x,y)=e^{y_{1}^{2}-x_{1}^{2}}+y_{2}-x_{2}-1,\\
		S=\{x\in \mathbb{R}^{2}\mid x_{1}\geq 0,~x_{2}\geq 0\},\\
		\bar{S}=\{(0,~0)\}.
	\end{array}
\end{eqnarray*}

3.
\begin{eqnarray*}
	(MP) \quad
	\begin{array}{ll}
		\min\limits_{x\in S}~\left\{f(x)=x_{1}\log(1+x_{2})+\frac{1}{2}(x_{1}-1)^{2}(x_{2}-1)^{2}\right\},\\
		S=\{x\in \mathbb{R}^{2}\mid x_{1}\geq 0,~x_{2}\geq 0,~x_{1}^{2}+x_{2}^{2}\leq 1\},\\
		\bar{S}=\{(1,0),~(0,1)\}.
	\end{array}
\end{eqnarray*}

4.
\begin{eqnarray*}
	(MP) \quad
	\begin{array}{ll}
		\min\limits_{x\in S}~\left\{f(x)=\sin x_{1}\cos x_{2}\right\},\\
		S=\{x\in \mathbb{R}^{2}\mid 0\leq x_{1}\leq
		\frac{\pi}{2},~0\leq x_{2}\leq \frac{\pi}{2}\},\\
		\bar{S}=\{x\in \mathbb{R}^{2}\mid x_{1}=0,~0\leq x_{2}\leq
		\frac{\pi}{2}\}\cup\{x\in \mathbb{R}^{2}\mid 0\leq
		x_{1}\leq \frac{\pi}{2},~x_{2}=\frac{\pi}{2}\}.
	\end{array}
\end{eqnarray*}

5.
\begin{eqnarray*}
	VIP(F,S) \quad
	\begin{array}{ll}
		F(x)=(-x_{1}e^{1-x_{1}^{2}},~x_{2}(x_{2}^{2}-1)),\\
		S=\{x\in \mathbb{R}^{2}\mid -1\leq x_{1}\leq 1,~-1\leq x_{2}\leq 1\},\\
		\bar{S}=\{((-1)^{i},(-1)^{j})\mid i=1,2,~j=1,2\}.
	\end{array}
\end{eqnarray*}

\section{Finite termination of feasible solution sequence}

In this section, under the condition that the solution set $\bar{S}$
of $EP(\phi,S)$ is augmented weak sharp with respect to
$\{x^{k}\}\subset S$ , we present the condition of finite
identification of $\{x^{k}\}$ (see Theorem \ref{Theorem 5.1}). Applying the
result to four special cases of $EP(\phi,S)$ (see Examples
\ref{Example 2.1}-\ref{Example 2.4}), we derive a series of results for finite identification of feasible solution sequences in these cases. In the first two special cases, i.e., the mathematical programming and variational
inequalities problems, these results are generalizations of the
corresponding results in the literature under the condition that
$\bar{S}$ is weakly sharp or strongly non-degenerate. In the last
two special cases, i.e., the saddle point and $Nash$ equilibrium
problems, the finite identification problems of feasible solution
sequences have not been studied by other authors in the literature.

\begin{theorem}\label{Theorem 5.1}\quad In the $EP(\phi,S)$, let $\bar{S}\subset S$ be a closed set, and for $\forall x\in
	S,\partial_{y}\phi(x,x)\neq\emptyset$, $\bar{S}$ is augmented weak
	sharp with respect to $\{x^{k}\}\subset S$. So the following
	conclusions are established.
	
	(i) Suppose \eqref{eq1.2} holds. If $\{x^{k}\}$ terminates finitely to
	$\bar{S}$, then we have
	\begin{eqnarray}\label{eq5.1}
		0\in\liminf_{k\rightarrow\infty}P_{T_{S}(x^{k})}(-\partial_{y}\phi(x^{k},x^{k})).
	\end{eqnarray}

	(ii) If \eqref{eq5.1} holds, then $\{x^{k}\}$ terminates finitely to
	$\bar{S}$.
\end{theorem}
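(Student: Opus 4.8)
The plan is to prove the two implications by quite different means: $(i)$ is a direct consequence of the stationarity inclusion \eqref{eq1.2} combined with the Moreau decomposition of a polar-cone vector, while $(ii)$ is the substantive direction and is where the full force of augmented weak sharpness (Definition \ref{Definition 3.4}) enters.

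For $(i)$, finite termination yields $k_0$ with $x^k\in\bar S$ for all $k\ge k_0$. By \eqref{eq1.2} each such $x^k$ lies in $\tilde S$, so there is $u^k\in\partial_y\phi(x^k,x^k)$ with $\langle u^k,y-x^k\rangle\ge 0$ for all $y\in S$; since $S$ is convex this says exactly $-u^k\in N_S(x^k)=T_S(x^k)^{\circ}$. The projection of any vector of a (closed convex) cone's polar onto the cone itself is $0$, so $P_{T_S(x^k)}(-u^k)=0$ and hence $0\in P_{T_S(x^k)}(-\partial_y\phi(x^k,x^k))$ for every $k\ge k_0$. Selecting this zero element for $k\ge k_0$ and any admissible element for $k<k_0$ (possible because $\partial_y\phi(x^k,x^k)\neq\emptyset$) gives a sequence tending to $0$, which is precisely \eqref{eq5.1} by the definition of $\liminf$ of sets.

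For $(ii)$, I would argue by contradiction, assuming $K=\{k\mid x^k\notin\bar S\}$ is infinite. Hypothesis \eqref{eq5.1} furnishes $u^k\in\partial_y\phi(x^k,x^k)$ with $p^k:=P_{T_S(x^k)}(-u^k)\to 0$. Write $d^k=x^k-P_{\bar S}(x^k)\neq 0$. The two geometric facts I would establish are: first, the Moreau decomposition $-u^k=p^k+q^k$ with $q^k\in N_S(x^k)$ and $p^k\perp q^k$, from which $P_{\bar S}(x^k)\in S$ gives $\langle q^k,d^k\rangle\ge 0$ and therefore $\langle u^k,d^k\rangle/\|d^k\|\le\|p^k\|$; second, that $d^k$ is a proximal-normal direction, hence $d^k\in\hat N_{\bar S}(P_{\bar S}(x^k))$, while convexity of $S$ together with $x^k,P_{\bar S}(x^k)\in S$ gives $d^k\in T_S(P_{\bar S}(x^k))$, so $d^k\in T_S(P_{\bar S}(x^k))\cap\hat N_{\bar S}(P_{\bar S}(x^k))$.

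Applying part $(a)$ of Definition \ref{Definition 3.4} to $\alpha\,d^k/\|d^k\|\in\alpha B$ produces $v^k_*\in H(P_{\bar S}(x^k))$ and $w^k\in[T_S(P_{\bar S}(x^k))\cap\hat N_{\bar S}(P_{\bar S}(x^k))]^{\circ}$ with $\alpha\,d^k/\|d^k\|=v^k_*+w^k$; pairing with $d^k$ and using $\langle w^k,d^k\rangle\le 0$ (by the membership just established) yields $\langle v^k_*,d^k\rangle/\|d^k\|\ge\alpha$. Combining the two estimates, the admissible choices $u^k$ (from \eqref{eq5.1}) and $v^k=v^k_*$ give $\psi_k\le\|p^k\|-\alpha$, so $\limsup_{k\in K}\psi_k\le-\alpha<0$, contradicting part $(b)$. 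Hence $K$ is finite and $\{x^k\}$ terminates finitely to $\bar S$. I expect the main obstacle to be the second geometric fact: confirming that the projection residual $d^k$ lies in both $T_S$ and the \emph{regular} normal cone $\hat N_{\bar S}$ at $P_{\bar S}(x^k)$, since exactly this double membership licenses the sign $\langle w^k,d^k\rangle\le 0$ that turns the sharpness modulus $\alpha$ into a quantitative lower bound, and it must be handled carefully because $\bar S$ need not be convex (so one relies on the proximal-to-regular normal inclusion rather than the convex normal cone).
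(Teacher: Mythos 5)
Your proposal is correct and takes essentially the same route as the paper's proof: part (i) via the polar-cone projection argument from \eqref{eq1.2}, and part (ii) by contradiction, establishing the double membership $x^{k}-P_{\bar{S}}(x^{k})\in T_{S}(P_{\bar{S}}(x^{k}))\cap\hat{N}_{\bar{S}}(P_{\bar{S}}(x^{k}))$ (proximal normal plus convexity of $S$), splitting $\alpha\,d^{k}/\|d^{k}\|$ via Definition \ref{Definition 3.4}$(a)$ to get $\langle v^{k},d^{k}\rangle/\|d^{k}\|\geq\alpha$, and bounding $\langle u^{k},d^{k}\rangle/\|d^{k}\|$ by $\|P_{T_{S}(x^{k})}(-u^{k})\|$ to contradict $(b)$. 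The only cosmetic difference is that you obtain this last bound from the Moreau decomposition directly, whereas the paper invokes the projected-gradient lemma of Calamai and Mor\'{e} — the same fact.
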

\begin{proof} (i) If $x^{k}\in\bar{S}$, then by \eqref{eq1.2}, we know that
	there exist $u^k\in \partial_y\phi(x^k,x^k)$ such that $-u^k\in
	N_S(x^k)$. Therefore by the convexity of $S$ and projective
	decomposition, we have that $P_{T_{S}(x^{k})}(-u^{k})=0$, i.e.,
	\eqref{eq5.1} holds.
	
	(ii) Suppose \eqref{eq5.1} holds. Now we prove that $\{x^{k}\}$ terminates
	finitely to $\bar{S}$. If not, then $K=\{k\mid x^{k}\notin\bar{S}\}$ is
	an infinite sequence. According to that $\bar{S}$ is augmented
	weakly sharp with respect to $\{x^{k}\}\subset S$, there exist a
	mapping $H:\bar{S}\rightarrow 2^{\mathbb{R}^{2}}$ and a constant $\alpha>0$
	such that
	\begin{eqnarray}\label{eq5.2}
		\alpha B\subset H(z)+[T_{S}(z)\cap\hat{N}_{\bar{S}}(z)]^{\circ}, \forall z\in\bar{S},
	\end{eqnarray}
	and for $\forall u^{k}\in\partial_{y}\phi(x^{k},x^{k})$, $v^{k}\in
	H(P_{\bar S}(x^{k}))$, we have
	\begin{eqnarray}\label{eq5.3}
		\limsup\limits_{k\in K,~k\rightarrow\infty}\frac{1}{\|x^{k}-P_{\bar{S}}(x^{k})\|}\langle u^{k}-v^{k},
		x^{k}-P_{\bar{S}}(x^{k})\rangle\geq 0.
	\end{eqnarray}
	Let
	$z^{k}=P_{\bar{S}}(x^{k})$. Then for $\forall z\in\bar{S}$ we have
	$\|z^{k}-x^{k}\|^{2}\leq \|z-x^{k}\|^{2}$, and
	$$\langle x^{k}-z^{k},z-z^{k}\rangle\leq\frac{1}{2}\|z-z^{k}\|^{2}\\
	=\circ(\|z-z^{k}\|).$$
	Therefore, by the definition of
	$\hat{N}_{\bar{S}}(\cdot)$ we obtain that
	\begin{eqnarray}\label{eq5.4}
		x^{k}-z^{k}\in\hat{N}_{\bar{S}}(z^{k}).
	\end{eqnarray}
	
	Furthermore, by the convexity of $S$, we have
	\begin{eqnarray}\label{eq5.5}
		x^{k}-z^{k}\in T_{S}(z^{k}),~z^{k}-x^{k}\in T_{S}(x^{k}).
	\end{eqnarray}
	According to \eqref{eq5.4} and \eqref{eq5.5}, we immediately get that
	\begin{eqnarray}\label{eq5.6}
		x^{k}-z^{k}\in T_{S}(z^{k})\cap\hat{N}_{\bar{S}}(z^{k}).
	\end{eqnarray}
	Let
	$g_{k}=\frac{x^{k}-z^{k}}{\|x^{k}-z^{k}\|}(k\in K).$ By \eqref{eq5.2} one
	can see that there exists $\bar{v}^{k}\in H(z^{k}),
	~\bar{\xi}^k\in[T_{S}(z^{k})\cap\hat{N}_{\bar{S}}(z^{k})]^{\circ}$
	such that for $\forall k\in K$. We have
	\begin{eqnarray}\label{eq5.7}
		\alpha g_{k}=\bar{v}^{k}+\bar{\xi}^{k}.
	\end{eqnarray}
	By \eqref{eq5.6} and \eqref{eq5.7}, for $\forall k\in K$ we obtain that
	\begin{eqnarray}\label{eq5.8}
		\alpha=\langle\bar{v}^{k},g_{k}\rangle+\langle\bar{\xi}^{k},g_{k}\rangle
		\leq\langle\bar{v}^{k},g_{k}\rangle.
	\end{eqnarray}
	On the other hand, from \eqref{eq5.1} one can see
	$$0\in\liminf_{k\in K,~k\rightarrow\infty}P_{T_{S}(x^{k})}(-\partial_{y}\phi(x^{k},x^{k})).$$
	Therefore, there exists
	$\bar{u}^{k}\in\partial_{y}\phi(x^{k},x^{k})$ such that
	\begin{eqnarray}\label{eq5.9}
		\lim\limits_{k\in K,~k\rightarrow\infty}P_{T_{S}(x^{k})}(-\bar{u}^{k})=0.
	\end{eqnarray}
	
	Using \eqref{eq5.5}, \eqref{eq5.8}, and the properties of the projected gradient ([\cite{calamai1987projected}, Lemma 3.1]), we immediately get that
	\begin{eqnarray*}
		\alpha&\leq &\langle\bar{v}^{k},g_{k}\rangle\\
		&=&\langle-\bar{u}^{k},-g_{k}\rangle-\langle\bar{u}^{k}-\bar{v}^{k},g_{k}\rangle\\
		&\leq &\max\{\langle-\bar{u}^{k},d\rangle\mid d\in T_{S}(x^{k}),\|d\|\leq 1\}-\langle\bar{u}^{k}-\bar{v}^{k},g_{k}\rangle\\
		&=&\|P_{T_{S}(x^{k})}(-\bar{u}^{k})\|-\langle\bar{u}^{k}-\bar{v}^{k},g_{k}\rangle.
	\end{eqnarray*}
	According to \eqref{eq5.3} and \eqref{eq5.9},
	\begin{eqnarray*}
		\alpha&\leq &\liminf_{k\in K,~k\rightarrow\infty}\{\|P_{T_{S}(x^{k})}(-\bar{u}^{k})\|-\langle\bar{u}^{k}-\bar{v}^{k},g_{k}\rangle\}\\
		&=&-\limsup\limits_{k\in
			K,~k\rightarrow\infty}\langle\bar{u}^{k}-\bar{v}^{k},g_{k}\rangle\leq
		0,
	\end{eqnarray*}
	which leads to a contradiction. The proof is complete.
\end{proof}

Applying Theorem \ref{Theorem 5.1} to the special cases (Examples \ref{Example 2.1}-\ref{Example 2.4}) of $EP(\phi, S)$, we can obtain the following corollaries.

\begin{corollary}\label{Corollary 5.1}
	The following conclusions hold.
	
	\hspace{-8mm}(1) In the $(MP)$, suppose that $\bar{S}\subset S$ is a closed
	set, $\partial f(x)\neq\emptyset$ for $\forall x\in S$, and
	$\bar{S}$ is augmented weak sharp with respect to
	$\{x^{k}\}\subset S$. So the following conclusions are
	established.
	
	(i)Suppose that \eqref{eq1.2} holds. If $\{x^{k}\}$ terminates finitely to
	$\bar{S}$, then we have
	\begin{eqnarray}\label{eq5.10}
		0\in\liminf_{k\rightarrow\infty } P_{T_{S}(x^{k})}(-\partial
		f(x^{k})).
	\end{eqnarray}
	
	(ii) If \eqref{eq5.10} holds, then $\{x^{k}\}$ terminates finitely to
	$\bar{S}$.
	
	\hspace{-8mm}(2) In the $VIP(F,S)$, suppose that $\bar{S}\subset S$ is a closed
	set, $\bar{S}$ is augmented weak sharp with respect to
	$\{x^{k}\}\subset S$. Then $\{x^{k}\}$ terminates finitely to
	$\bar{S}$ if and only if
	\begin{eqnarray}\label{eq5.11}
		\lim\limits_{k\rightarrow\infty}P_{T_{S}(x^{k})}(-F(x^{k}))=0.
	\end{eqnarray}
	
	\hspace{-8mm}(3) In the $(SPP)$, suppose that $\bar{S}\subset S$ is a closed set,
	for $\forall x\in S, \partial\varphi(x)\neq\emptyset$, $\bar{S}$ is
	augmented weakly sharp with respect to ${\{x^{k}\}}\subset S$. So
	the following conclusions are established.
	
	(i) Suppose that \eqref{eq1.2} holds. If $\{x^{k}\}$ terminates finitely
	to $\bar{S}$, then we have
	\begin{eqnarray}\label{eq5.12}
		0\in\liminf_{k\rightarrow\infty
		}P_{T_{S}(x^{k})}((-\partial_{y_{1}}\varphi(x^{k}),\partial_{y_{2}}\varphi(x^{k}))).
	\end{eqnarray}
	
	(ii) If \eqref{eq5.12} holds, then $\{x^{k}\}$ terminates finitely to
	$\bar{S}$.
	
	\hspace{-8mm}(4) In the $(NEP)$, suppose that $\bar{S}\subset S$ is a closed
	set, $\partial_{y_{i}}f_{i}(x)\neq\emptyset$ for $\forall x\in S$
	and $i\in I={\{1,2,..,n\}}$, $\bar{S}$ is augmented weak sharp
	with respect to $\{x^{k}\}\subset S$. So the following conclusions
	are established.
	
	(i) Suppose that \eqref{eq1.2} holds. If $\{x^{k}\}$ terminates finitely
	to $\bar{S}$, then we have
	\begin{eqnarray}\label{eq5.13}
		0\in\liminf_{k\rightarrow\infty} P_{T_{S}(x^{k})}(-(\partial_{y_{i}}f_{i}(x^{k}),i\in I)).
	\end{eqnarray}
	
	(ii) If \eqref{eq5.13} holds, then $\{x^{k}\}$ terminates finitely to
	$\bar{S}$.
\end{corollary}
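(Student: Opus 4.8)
The plan is to treat Corollary 5.1 as a direct specialization of Theorem \ref{Theorem 5.1}, so the only genuine work is to identify $\partial_{y}\phi(x,x)$ explicitly for each of the four bifunctions in Examples \ref{Example 2.1}--\ref{Example 2.4} and then substitute into \eqref{eq5.1}. Since Theorem \ref{Theorem 5.1} already supplies both the necessity part (i) and the sufficiency part (ii) of the limit condition, each case reduces to a subdifferential computation followed by an invocation of the theorem.

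First I would carry out the four subdifferential calculations. For $(MP)$, $\phi(x,\cdot)=f(\cdot)-f(x)$ differs from $f$ only by a constant in $y$, so $\partial_{y}\phi(x,x)=\partial f(x)$, turning \eqref{eq5.1} into \eqref{eq5.10}. For $VIP(F,S)$, $\phi(x,\cdot)=\langle F(x),\cdot-x\rangle$ is affine in $y$, hence $\partial_{y}\phi(x,x)=\{F(x)\}$ and \eqref{eq5.1} becomes \eqref{eq5.11}. For $(SPP)$ the map $y\mapsto\varphi(y_{1},x_{2})-\varphi(x_{1},y_{2})$ is separable in the blocks $y_{1}$ and $y_{2}$, so the subdifferential splits as $\partial_{y}\phi(x,x)=(\partial_{y_{1}}\varphi(x),-\partial_{y_{2}}\varphi(x))$, giving $-\partial_{y}\phi(x,x)=(-\partial_{y_{1}}\varphi(x),\partial_{y_{2}}\varphi(x))$ and hence \eqref{eq5.12}. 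For $(NEP)$, each summand $f_{i}(x^{i},y_{i})-f_{i}(x)$ depends on $y$ only through $y_{i}$, so the blockwise subdifferential is $\partial_{y}\phi(x,x)=(\partial_{y_{i}}f_{i}(x),\,i\in I)$, yielding \eqref{eq5.13}. Each computation uses only the separability of the relevant function and the fact that $\phi(x,x)=0$ renders the subtracted terms constant in $y$.

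With these identifications in hand, cases (1), (3), and (4) follow verbatim from Theorem \ref{Theorem 5.1}(i)--(ii): assuming \eqref{eq1.2} yields the necessity of the respective limit condition under finite termination, while the limit condition alone yields finite termination. The variational inequality case (2) deserves a short separate argument explaining why it upgrades to an equivalence without an explicit \eqref{eq1.2} hypothesis. Here $\partial_{y}\phi(x,x)=\{F(x)\}$ is single valued, so $\tilde{S}=\{x\in S\mid\langle F(x),y-x\rangle\geq0,\ \forall y\in S\}=\bar{S}$; thus $\bar{S}\subseteq\tilde{S}$ holds automatically and Theorem \ref{Theorem 5.1}(i) applies unconditionally. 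Moreover, since the iterated set $P_{T_{S}(x^{k})}(-F(x^{k}))$ is a singleton, the condition $0\in\liminf_{k}P_{T_{S}(x^{k})}(-F(x^{k}))$ is equivalent to $\lim_{k}P_{T_{S}(x^{k})}(-F(x^{k}))=0$, which is exactly \eqref{eq5.11}; combining necessity and sufficiency produces the stated ``if and only if''.

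I anticipate the only delicate point to be the blockwise subdifferential formulas for $(SPP)$ and $(NEP)$: one must justify that the subdifferential of a sum (or difference) of functions acting on disjoint groups of the $y$-variables decomposes as the product of the partial subdifferentials, and carefully track the minus sign coming from the $-\varphi(x_{1},y_{2})$ term in the saddle point case. Everything else is a routine substitution into Theorem \ref{Theorem 5.1}.
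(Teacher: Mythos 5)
Your proposal is correct and follows exactly the paper's route: the paper gives no separate argument for Corollary \ref{Corollary 5.1} beyond the remark that it is obtained by ``applying Theorem \ref{Theorem 5.1} to the special cases (Examples \ref{Example 2.1}--\ref{Example 2.4})'', which is precisely your substitution of $\partial_y\phi(x,x)=\partial f(x)$, $\{F(x)\}$, $(\partial_{y_1}\varphi(x),-\partial_{y_2}\varphi(x))$, and $(\partial_{y_i}f_i(x),\,i\in I)$ into \eqref{eq5.1}. Your additional observation for case (2) --- that for $VIP(F,S)$ the inclusion \eqref{eq1.2} holds automatically since $\bar{S}=\tilde{S}$, and that single-valuedness turns the $\liminf$ membership into the limit \eqref{eq5.11} --- correctly supplies the justification, left implicit in the paper, for why that case is stated as an unconditional equivalence.
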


Next, we apply Theorem \ref{Theorem 5.1} to a kind of important function, i.e., for $\forall x\in S$, $\phi(x,\cdot)$ is locally Lipschitzian function
on $\mathbb{R}^{n}$. For this function, by [\cite{rockafellar1998variational}, Theorem 9.13 and Theorem 8.15],
we know \eqref{eq1.2} is established, and $\partial_y\phi(x,x)\neq\emptyset$. Therefore by Theorem \ref{Theorem 5.1}, we have the following corollary.
\begin{corollary}\label{Corollary 5.2}
	In the $EP(\phi,S)$, suppose that $\bar{S}\subset S$ is a closed set, for $\forall x\in S$, $\phi(x,\cdot)$ is locally Lipschitzian, $\bar{S}$ is augmented weak sharp with respect to ${\{x^{k}\}}\subset S$, then $\{x^{k}\}\subset S$ terminates finitely to $\bar{S}$, if and only if \eqref{eq5.1} holds.
\end{corollary}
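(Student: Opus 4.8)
The plan is to read Corollary~\ref{Corollary 5.2} off as a direct specialization of Theorem~\ref{Theorem 5.1}. That theorem already packages both halves of a biconditional: part~(ii) gives the implication \eqref{eq5.1}~$\Rightarrow$~finite termination using only augmented weak sharpness (and the blanket nonemptiness of the subdifferential), while part~(i) gives finite termination~$\Rightarrow$~\eqref{eq5.1} under the additional inclusion \eqref{eq1.2}, $\bar S\subseteq\tilde S$. Consequently, the entire task reduces to checking that the local Lipschitz continuity of $\phi(x,\cdot)$ supplies the two standing hypotheses of Theorem~\ref{Theorem 5.1}, namely $\partial_y\phi(x,x)\neq\emptyset$ for every $x\in S$ and the inclusion \eqref{eq1.2}. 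Once these are secured, both directions of the claimed equivalence follow by quoting Theorem~\ref{Theorem 5.1}(i) and (ii) in turn.

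First I would establish nonemptiness of the subdifferential. Fixing $x\in S$ and writing $\psi:=\phi(x,\cdot)$, which is locally Lipschitzian on $\mathbb{R}^n$ by hypothesis, I invoke [\cite{rockafellar1998variational}, Theorem~9.13]: a function Lipschitz continuous around a point has a nonempty, compact subdifferential there (equivalently, its subgradient mapping is locally bounded). Evaluating at the diagonal point $y=x$ yields $\partial_y\phi(x,x)=\partial\psi(x)\neq\emptyset$, which is the first standing assumption and is all that part~(ii) requires beyond augmented weak sharpness.

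Next I would verify \eqref{eq1.2}. Let $\bar x\in\bar S$. By definition $\phi(\bar x,y)\geq 0=\phi(\bar x,\bar x)$ for all $y\in S$, so $\bar x$ is a global minimizer of $\psi:=\phi(\bar x,\cdot)$ over the closed convex set $S$; equivalently $\bar x$ minimizes $\psi$ plus the indicator of $S$. The Fermat rule together with the subgradient sum rule [\cite{rockafellar1998variational}, Theorem~8.15] then yields $0\in\partial_y\phi(\bar x,\bar x)+N_S(\bar x)$, the sum rule being legitimate precisely because the Lipschitzian summand is subdifferentially regular with bounded subgradient set, so the basic constraint qualification holds automatically. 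Hence there exists $u\in\partial_y\phi(\bar x,\bar x)$ with $-u\in N_S(\bar x)$; by convexity of $S$ this reads $\langle u,y-\bar x\rangle\geq 0$ for all $y\in S$, i.e.\ $\bar x\in\tilde S$. Since $\bar x\in\bar S$ was arbitrary, $\bar S\subseteq\tilde S$, which is \eqref{eq1.2}.

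With both standing hypotheses of Theorem~\ref{Theorem 5.1} confirmed, the equivalence is immediate: if $\{x^k\}$ terminates finitely to $\bar S$, then Theorem~\ref{Theorem 5.1}(i) gives \eqref{eq5.1}; conversely, if \eqref{eq5.1} holds, Theorem~\ref{Theorem 5.1}(ii) gives finite termination. The only genuinely substantive step is the verification of \eqref{eq1.2}: one must be able to split the first-order optimality condition across the objective $\phi(\bar x,\cdot)$ and the feasible-set normal cone, and it is exactly the local Lipschitz hypothesis that guarantees the constraint qualification making this sum rule valid. Everything else is a matter of quoting the cited subdifferential calculus and then the already-proved Theorem~\ref{Theorem 5.1}, so I do not anticipate any further obstacle.
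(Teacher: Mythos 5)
Your proposal is correct and takes essentially the same route as the paper: the paper's proof is precisely the observation that local Lipschitz continuity of $\phi(x,\cdot)$ yields $\partial_y\phi(x,x)\neq\emptyset$ and the inclusion \eqref{eq1.2} via [\cite{rockafellar1998variational}, Theorems 9.13 and 8.15], after which Theorem~\ref{Theorem 5.1}(i) and (ii) give the two directions of the equivalence. One minor correction to your justification: locally Lipschitz functions need not be subdifferentially regular; the constraint qualification in [\cite{rockafellar1998variational}, Theorem 8.15] holds automatically because local Lipschitz continuity forces the horizon subdifferential to be $\{0\}$ (equivalently, the subgradient sets are bounded, as you also note), so your conclusion stands.
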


By Corollary \ref{Corollary 5.2} and Proposition \ref{Proposition 3.1}, we immediately get the following corollary.
\begin{corollary}\label{Corollary 5.3}
	In the $EP(\phi,S)$, suppose $\bar{S}\subset S$ is a closed set, $\phi(x,\cdot)$ is locally Lipschitzian function on $\mathbb{R}^{n}$ for $\forall x\in S$, and $\partial_y\phi(x,x)$ is monotone over $S$. If $\bar{S}$ is a weak sharp set, then $\{x^{k}\}\subset S$ terminates finitely to $\bar{S}$, if and only if \eqref{eq5.1} holds.
\end{corollary}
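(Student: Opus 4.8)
The plan is to obtain Corollary~\ref{Corollary 5.3} as a direct composition of Proposition~\ref{Proposition 3.1} and Corollary~\ref{Corollary 5.2}, so the real work is simply to check that the hypotheses of those two results are all in force under the present assumptions. First I would record the consequences of the local Lipschitz hypothesis: since $\phi(x,\cdot)$ is locally Lipschitzian on $\mathbb{R}^{n}$ for every $x\in S$, the cited results [\cite{rockafellar1998variational}, Theorem 9.13 and Theorem 8.15] guarantee both that $\partial_{y}\phi(x,x)\neq\emptyset$ for all $x\in S$ and that the inclusion \eqref{eq1.2} holds. These two facts are precisely the standing assumptions invoked in the statements of Proposition~\ref{Proposition 3.1} and of Theorem~\ref{Theorem 5.1} (through which Corollary~\ref{Corollary 5.2} is obtained), so this step clears the entry conditions for everything that follows.

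Next I would upgrade the weak sharpness hypothesis to augmented weak sharpness. Because $\bar{S}\subset S$ is closed, $\partial_{y}\phi(x,x)\neq\emptyset$ for all $x\in S$, and $\partial_{y}\phi(x,x)$ is monotone over $S$, Proposition~\ref{Proposition 3.1} applies verbatim: the assumption that $\bar{S}$ is a weak sharp set yields that $\bar{S}$ is augmented weakly sharp with respect to \emph{every} sequence $\{x^{k}\}\subset S$, and in particular with respect to the arbitrary sequence fixed in the statement. The point worth emphasizing is that monotonicity is exactly what supplies condition $(b)$ of Definition~\ref{Definition 3.4} once $H(z):=\partial_{y}\phi(z,z)$ is chosen, while condition $(a)$ is inherited directly from the weak sharpness inequality \eqref{eq3.4}; no further estimate is needed here.

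Finally, with augmented weak sharpness now established for $\{x^{k}\}$ and with $\phi(x,\cdot)$ locally Lipschitzian, Corollary~\ref{Corollary 5.2} applies directly and gives the desired equivalence: $\{x^{k}\}\subset S$ terminates finitely to $\bar{S}$ if and only if \eqref{eq5.1} holds. I do not expect a genuine obstacle in this argument, since the whole content is the bookkeeping of matching hypotheses; the only subtlety I would be careful about is confirming that local Lipschitzness alone (rather than convexity or smoothness) is enough to secure $\partial_{y}\phi(x,x)\neq\emptyset$ and the inclusion \eqref{eq1.2}, which is why I would foreground the two Rockafellar--Wets theorems at the outset rather than leaving them implicit.
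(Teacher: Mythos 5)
Your proposal is correct and follows exactly the paper's route: the paper obtains Corollary \ref{Corollary 5.3} precisely by combining Proposition \ref{Proposition 3.1} (monotonicity of $\partial_y\phi(x,x)$ upgrades weak sharpness to augmented weak sharpness for every feasible sequence) with Corollary \ref{Corollary 5.2}, whose own hypotheses are cleared by the same two Rockafellar--Wets theorems you cite for nonemptiness of $\partial_y\phi(x,x)$ and the inclusion \eqref{eq1.2}. No gap; your bookkeeping of hypotheses matches the paper's implicit argument.
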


\begin{remark}\label{Remark 5.1}
	By Remark \ref{Remark 3.2}, one can see that the monotonicity of $\partial_{y}\phi(x,x)$ does not imply the convexity of $\phi(x,\cdot)$, and the reverse is also true. For example, $\phi(x,y)=x^{2}y-x^{3},(x,y)\in R\times R$.
\end{remark}

Notice that a finite convex function on $\mathbb{R}^{n}$ is locally Lipschitzian, therefore by Corollary \ref{Corollary 5.3} and Proposition \ref{Proposition 3.2}, we immediately get the following corollary.

\begin{corollary}\label{Corollary 5.4}
	In the $EP(\phi,S)$, suppose $\bar{S}\subset S$ is a closed set, for $\forall x\in S$, $\phi(x,\cdot)$ is convex function on $\mathbb{R}^{n}$, and $\phi(\cdot,\cdot)$ is monotonic over $S\times S$. If $\bar{S}$ is a weak sharp set, then $\{x^{k}\}\subset S$ terminates finitely to $\bar{S}$, if and only if \eqref{eq5.1} holds.
\end{corollary}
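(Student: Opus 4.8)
The plan is to verify that the three hypotheses of Corollary \ref{Corollary 5.3} are all satisfied under the assumptions given here, so that the desired equivalence follows at once. Corollary \ref{Corollary 5.3} requires that $\bar{S}\subset S$ be closed, that $\phi(x,\cdot)$ be locally Lipschitzian on $\mathbb{R}^{n}$ for each $x\in S$, and that $\partial_{y}\phi(x,x)$ be monotone over $S$. The closedness of $\bar{S}$ is assumed directly, so only the latter two conditions need to be recovered from the present hypotheses.

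First I would dispense with the local Lipschitz requirement. Since $\phi(x,\cdot)$ is a finite (real-valued) convex function on all of $\mathbb{R}^{n}$, a standard fact of convex analysis guarantees that it is locally Lipschitzian; this is precisely the remark made immediately before the statement. Hence the second hypothesis of Corollary \ref{Corollary 5.3} holds automatically.

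Next I would supply the monotonicity of $\partial_{y}\phi(x,x)$. The two standing assumptions here — convexity of $\phi(x,\cdot)$ for every $x\in S$, and monotonicity of $\phi(\cdot,\cdot)$ over $S\times S$ — are exactly conditions $(i)$ and $(ii)$ of Proposition \ref{Proposition 3.2}. Invoking that proposition therefore yields that $\partial_{y}\phi(x,x)$ is monotonic over $S$, which is the remaining hypothesis.

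With all three conditions of Corollary \ref{Corollary 5.3} now in force, together with the assumption that $\bar{S}$ is a weak sharp set, I would simply apply Corollary \ref{Corollary 5.3} to conclude that $\{x^{k}\}\subset S$ terminates finitely to $\bar{S}$ if and only if \eqref{eq5.1} holds. Since each step is a direct citation, there is no genuine obstacle here; the only point worth care is confirming that the convexity assumed is of a finite-valued function (as it is, because $\phi$ maps into $\mathbb{R}$), so that the implication \emph{convex $\Rightarrow$ locally Lipschitzian} is indeed available.
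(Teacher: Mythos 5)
Your proof is correct and follows exactly the paper's route: the paper likewise notes that a finite convex function on $\mathbb{R}^{n}$ is locally Lipschitzian, invokes Proposition \ref{Proposition 3.2} to get monotonicity of $\partial_{y}\phi(x,x)$, and then applies Corollary \ref{Corollary 5.3}. No gaps; the argument matches the intended one.
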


For the special cases (Examples \ref{Example 2.1}-\ref{Example 2.3}) of $EP(\phi,S)$, we have the following corollaries.

\begin{corollary}\label{Corollary 5.5}
	The following conclusions are established.
	
	(1) In the convex programming $(MP)$, suppose $\bar{S}\subset S$
	is a weak sharp minimal set. Then $\{x^{k}\}\subset S$ terminates
	finitely to $\bar{S}$, if and only if \eqref{eq5.10} holds.
	
	(2) In the $VIP(F,S)$, suppose $F(\cdot)$ is monotonic over $S$,
	$\bar{S}\subset S$ is a closed and weak sharp set. Then
	$\{x^{k}\}\subset S$ terminates finitely to $\bar{S}$, if and only
	if \eqref{eq5.11} holds.
	
	(3) In the $(SPP)$, for $\forall x_{2}\in S_{2}$,
	$\varphi(\cdot,x_{2})$ is a convex function over $\mathbb{R}^{n_1}$; for
	$\forall x_{1}\in S_{1}$, $\varphi(x_{1},\cdot)$ is a concave
	function over $\mathbb{R}^{n_2}$, and $\bar{S}\subset S$ is a weak sharp
	set. Then $\{x^{k}\}\subset S$ terminates finitely to $\bar{S}$, if
	and only if \eqref{eq5.12} holds.
\end{corollary}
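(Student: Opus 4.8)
The plan is to exhibit each of the three problems as a special case of $EP(\phi,S)$ for which the hypotheses of Corollary \ref{Corollary 5.4} hold, and then to obtain the stated termination criterion by substituting the appropriate form of $\partial_y\phi(x,x)$ into \eqref{eq5.1}. Recall that Corollary \ref{Corollary 5.4} guarantees, whenever $\bar S\subset S$ is closed, $\phi(x,\cdot)$ is convex on $\mathbb{R}^n$ for every $x\in S$, $\phi$ is monotonic on $S\times S$, and $\bar S$ is a weak sharp set, that $\{x^k\}\subset S$ terminates finitely to $\bar S$ if and only if \eqref{eq5.1} holds. So for each part I would verify convexity of $\phi(x,\cdot)$, monotonicity of $\phi$, closedness and weak sharpness of $\bar S$, and then specialize \eqref{eq5.1}.

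For part (1), with $\phi(x,y)=f(y)-f(x)$ as in Example \ref{Example 2.1}, the map $\phi(x,\cdot)=f(\cdot)-f(x)$ is convex because $f$ is, $\phi$ is monotonic by Example \ref{Example 2.1}, and the set of minimizers of a convex program is closed and convex. On this convex $\bar S$ one has $\hat N_{\bar S}=N_{\bar S}$, so the weak sharp minimality of $\bar S$ (Definition \ref{Definition 3.1}), characterized by \eqref{eq3.2} as $\alpha B\subset\partial f(x)+[T_S(x)\cap N_{\bar S}(x)]^\circ$, is precisely weak sharpness in the sense of Definition \ref{Definition 3.2}, since $\partial_y\phi(x,x)=\partial f(x)$. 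Corollary \ref{Corollary 5.4} then applies, and the identification $\partial_y\phi(x^k,x^k)=\partial f(x^k)$ turns \eqref{eq5.1} into \eqref{eq5.10}.

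For part (2), with $\phi(x,y)=\langle F(x),y-x\rangle$ as in Example \ref{Example 2.2}, the map $\phi(x,\cdot)$ is affine and hence convex, and Example \ref{Example 2.2} equates monotonicity of $F$ on $S$ with monotonicity of $\phi$ on $S\times S$, which the hypothesis on $F$ supplies; $\bar S$ is closed and weakly sharp by assumption. Corollary \ref{Corollary 5.4} gives the equivalence, and because $\partial_y\phi(x,x)=\{F(x)\}$ is single-valued, the $\liminf$ in \eqref{eq5.1} reduces to an ordinary limit, so \eqref{eq5.1} becomes \eqref{eq5.11}.

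For part (3), with $\phi(x,y)=\varphi(y_1,x_2)-\varphi(x_1,y_2)$ as in Example \ref{Example 2.3}, I would verify convexity of $y\mapsto\phi(x,y)$ directly: $\varphi(y_1,x_2)$ is convex in $y_1$ by convexity of $\varphi(\cdot,x_2)$, while $-\varphi(x_1,y_2)$ is convex in $y_2$ by concavity of $\varphi(x_1,\cdot)$, so the sum is convex in $y=(y_1,y_2)$. Example \ref{Example 2.3} records that $\phi$ is monotonic, and $\bar S$ is weakly sharp by assumption, so Corollary \ref{Corollary 5.4} applies; computing $\partial_y\phi(x^k,x^k)=(\partial_{y_1}\varphi(x^k),-\partial_{y_2}\varphi(x^k))$, whence $-\partial_y\phi(x^k,x^k)=(-\partial_{y_1}\varphi(x^k),\partial_{y_2}\varphi(x^k))$, converts \eqref{eq5.1} into \eqref{eq5.12}. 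Across the three parts the only genuinely delicate step is this convexity check for the saddle-point bifunction, where the sign bookkeeping between the convex first argument and the concave second argument of $\varphi$ must be tracked carefully; the rest is a direct invocation of Corollary \ref{Corollary 5.4} together with the identifications of $\partial_y\phi(x,x)$ supplied by Examples \ref{Example 2.1}--\ref{Example 2.3}.
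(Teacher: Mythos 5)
Your proof is correct and takes essentially the same route as the paper: both reduce each special case to the corollaries already established in Section 5 (verifying closedness, convexity/monotonicity, and weak sharpness) and then specialize \eqref{eq5.1} to obtain \eqref{eq5.10}, \eqref{eq5.11}, and \eqref{eq5.12}. The only cosmetic difference is that you invoke Corollary \ref{Corollary 5.4} for all three parts, whereas the paper uses Corollary \ref{Corollary 5.3} for parts (1) and (2) and Corollary \ref{Corollary 5.4} only for part (3); since Corollary \ref{Corollary 5.4} is itself a consequence of Corollary \ref{Corollary 5.3} and Proposition \ref{Proposition 3.2}, the two arguments coincide.
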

\begin{proof}
	According to the hypotheses in (1) and (2), one can see that they all meet the hypotheses conditions of Corollary \ref{Corollary 5.3}. For (3), by its hypotheses and \eqref{eq2.3}, one can see that the hypotheses conditions of Corollary \ref{Corollary 5.4} are satisfied. So we immediately get that these conclusions (1)-(3) hold.
\end{proof}

\begin{remark}\label{Remark 5.2}
	Corollary \ref{Corollary 5.5} (1) is equivalent to [\cite{zhou2012new}, Theorem 3.1].
\end{remark}

By Corollary \ref{Corollary 5.2} and Proposition \ref{Proposition 3.3}, we can obtain the following corollary.

\begin{corollary}\label{Corollary 5.6}
	Under the hypothesis in Theorem \ref{Theorem 3.1}, and suppose $\bar{S}\subset S$ is closed and $\partial_{y}\phi(x,x)$ is monotonic over $S$. If for $\forall x\in\bar{S}$,
	$$-\partial_{y}\phi(x,x)\cap(-N_{S}(x))\subset \text{int} G ,$$
	then $\{x^{k}\}\subset S$ terminates finitely to $\bar{S}$, if and only if \eqref{eq5.1} holds.
\end{corollary}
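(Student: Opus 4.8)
The plan is to recognize that Corollary~\ref{Corollary 5.6} is a pure bookkeeping consequence of two earlier results: Proposition~\ref{Proposition 3.3}, which converts the displayed inclusion into augmented weak sharpness, and Corollary~\ref{Corollary 5.2}, which delivers the finite-termination equivalence once augmented weak sharpness and local Lipschitzianity are in place. So my strategy is a three-link chain: first establish that the standing assumptions force $\phi(x,\cdot)$ to be locally Lipschitzian (making Corollary~\ref{Corollary 5.2} applicable), then invoke Proposition~\ref{Proposition 3.3} to obtain augmented weak sharpness, and finally quote Corollary~\ref{Corollary 5.2}. No new estimate is required.

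First I would record that condition $(i)$ of Theorem~\ref{Theorem 3.1}, namely that $\phi(x,\cdot)$ is convex on all of $\mathbb{R}^{n}$ for every $x\in S$, makes each $\phi(x,\cdot)$ a finite convex function on $\mathbb{R}^{n}$, and such a function is locally Lipschitzian. By the remarks preceding Corollary~\ref{Corollary 5.2}, this local Lipschitzianity guarantees both $\partial_{y}\phi(x,x)\neq\emptyset$ for every $x\in S$ and the inclusion \eqref{eq1.2}, i.e. $\bar{S}\subseteq\tilde{S}$. Consequently every blanket hypothesis of Corollary~\ref{Corollary 5.2} is already met except for the augmented weak sharpness of $\bar{S}$.

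Next I would observe that the inclusion $-\partial_{y}\phi(x,x)\cap(-N_{S}(x))\subset \text{int}\,G$ for all $x\in\bar{S}$ assumed in the statement is exactly hypothesis \eqref{eq3.10} of Proposition~\ref{Proposition 3.3}. Since the remaining requirements of that proposition—the full set of assumptions of Theorem~\ref{Theorem 3.1}, the closedness of $\bar{S}$, and the monotonicity of $\partial_{y}\phi(x,x)$ over $S$—are all part of the present hypotheses, Proposition~\ref{Proposition 3.3} applies and yields that $\bar{S}$ is augmented weakly sharp with respect to every sequence $\{x^{k}\}\subset S$.

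Finally, with augmented weak sharpness established and local Lipschitzianity of $\phi(x,\cdot)$ already in hand, every hypothesis of Corollary~\ref{Corollary 5.2} is satisfied; applying that corollary gives at once that $\{x^{k}\}\subset S$ terminates finitely to $\bar{S}$ if and only if \eqref{eq5.1} holds, which is the desired conclusion. I do not expect a genuine obstacle here: the only point requiring care is the hypothesis matching—specifically noticing that convexity supplies the local Lipschitz property that Corollary~\ref{Corollary 5.2} demands, and that the inclusion in the statement is verbatim condition \eqref{eq3.10}, so that Proposition~\ref{Proposition 3.3} can be invoked without verifying anything further.
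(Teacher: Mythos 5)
Your proposal is correct and matches the paper's own proof, which derives Corollary \ref{Corollary 5.6} precisely by combining Proposition \ref{Proposition 3.3} (to get augmented weak sharpness from the inclusion hypothesis) with Corollary \ref{Corollary 5.2}. Your explicit remark that convexity of $\phi(x,\cdot)$ supplies the local Lipschitzianity needed for Corollary \ref{Corollary 5.2} is a detail the paper leaves implicit (it appears only in the remark preceding Corollary \ref{Corollary 5.4}), but it is the same chain of reasoning.
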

Since the convex programming $(MP)$, as a special case of $EP(\phi,S)$, satisfies the hypotheses in Corollary \ref{Corollary 5.6}, we can obtain the following corollary.

\begin{corollary}\label{Corollary 5.7}
	Suppose that in the convex programming $(MP)$, it holds that for $\forall x\in\bar{S}$, 
	$$-\partial f(x)\cap(-N_{S}(x))\subset \text{int} G.$$
	Then $\{x^{k}\}\subset S$ terminates finitely to $\bar{S}$, if and only if \eqref{eq5.10} holds.
\end{corollary}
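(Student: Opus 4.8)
The plan is to obtain Corollary \ref{Corollary 5.7} as the specialization of Corollary \ref{Corollary 5.6} to the convex programming instance of $EP(\phi,S)$, so the entire task reduces to verifying that convex $(MP)$ meets every hypothesis of Corollary \ref{Corollary 5.6}. First I would recall from Example \ref{Example 2.1} that $(MP)$ corresponds to the bifunction $\phi(x,y)=f(y)-f(x)$, whence $\partial_{y}\phi(x,x)=\partial f(x)$. Under this identification \eqref{eq5.1} becomes exactly \eqref{eq5.10}, and the standing assumption $-\partial f(x)\cap(-N_{S}(x))\subset\text{int}\,G$ of Corollary \ref{Corollary 5.7} is precisely the hypothesis $-\partial_{y}\phi(x,x)\cap(-N_{S}(x))\subset\text{int}\,G$ required by Corollary \ref{Corollary 5.6}. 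Thus once the remaining structural hypotheses are checked, the stated equivalence follows verbatim.

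Next I would verify the hypotheses of Theorem \ref{Theorem 3.1} together with the closedness and monotonicity requirements. Condition $(i)$ of Theorem \ref{Theorem 3.1} holds because $f$ is convex, so $\phi(x,\cdot)=f(\cdot)-f(x)$ is convex for each fixed $x$. Condition $(ii)$ holds because every point of $\bar{S}$ is a global minimizer of $f$ over $S$; hence $f$ is constant on $\bar{S}$, say $f\equiv m$ there, and for any $x,z\in\bar{S}$ one has $\phi(x,y)=f(y)-m=\phi(z,y)$ for all $y$. Closedness of $\bar{S}$ follows from continuity of the finite convex function $f$ on $\mathbb{R}^{n}$, since $\bar{S}$ is a lower level set of a continuous function intersected with the closed set $S$. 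For the monotonicity of $\partial_{y}\phi(x,x)=\partial f(x)$ I would invoke Proposition \ref{Proposition 3.2}: since $\phi(x,y)+\phi(y,x)=0$ for $(MP)$, the bifunction $\phi$ is monotonic over $S\times S$, and combined with the convexity just established, Proposition \ref{Proposition 3.2} yields monotonicity of $\partial f$ over $S$ (equivalently, this is the classical monotonicity of the subdifferential of a convex function).

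Having confirmed all hypotheses, I would simply apply Corollary \ref{Corollary 5.6} with $\phi(x,y)=f(y)-f(x)$, read off \eqref{eq5.1} as \eqref{eq5.10}, and conclude that $\{x^{k}\}\subset S$ terminates finitely to $\bar{S}$ if and only if \eqref{eq5.10} holds. No additional argument is needed beyond this translation.

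The main obstacle, such as it is, is not analytic but lies entirely in the hypothesis verification, and among these the only nontrivial point is condition $(ii)$ of Theorem \ref{Theorem 3.1}: one must observe that the common value of $f$ on $\bar{S}$ makes the bifunction $\phi(x,\cdot)$ independent of the base point $x\in\bar{S}$. Everything else --- the convexity, closedness, and monotonicity --- is immediate from standard convex-analytic facts, so once $(ii)$ is in place the result is a direct consequence of Corollary \ref{Corollary 5.6}.
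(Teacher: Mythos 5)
Your proposal is correct and follows exactly the paper's route: the paper derives Corollary \ref{Corollary 5.7} by the one-line observation that convex $(MP)$, with $\phi(x,y)=f(y)-f(x)$, satisfies all the hypotheses of Corollary \ref{Corollary 5.6}, which is precisely the reduction you carry out. Your verification of the hypotheses (convexity of $\phi(x,\cdot)$, constancy of $f$ on $\bar{S}$ giving condition $(ii)$ of Theorem \ref{Theorem 3.1}, closedness of $\bar{S}$, and monotonicity of $\partial f$ via Proposition \ref{Proposition 3.2}) simply makes explicit what the paper leaves implicit.
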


\begin{remark}\label{Remark 5.3} Corollary \ref{Corollary 5.7} is a generalization of [\cite{burke1993weak}, Theorem 4.7] in the smooth convex programming, and in Corollary \ref{Corollary 5.7}, the two hypotheses about $\{x_{k}\}$ and $\nabla f(\cdot)$ in [\cite{burke1993weak}, Theorem 4.7] are removed.
\end{remark}

Next, we consider the smooth situation, the case that $\phi(x,\cdot)$
is locally Lipschitzian. Therefore, by Proposition \ref{Proposition 3.4} and
Corollary \ref{Corollary 5.2}, we obtain the following corollary.
\begin{corollary}\label{Corollary 5.8}
	In the $EP(\phi,S)$, suppose $\bar{S}\subset
	S$ is a closed set, and $\{x^{k}\}\subset S$ satisfies \eqref{eq3.11}. If
	$\bar{S}$ is weak sharp, then $\{x^{k}\}$ terminates finitely to
	$\bar{S}$, if and only if
	\begin{eqnarray}\label{eq5.14}
		\lim\limits_{k\rightarrow\infty}P_{T_{S}(x^{k})}(-\nabla_{y}\phi(x^{k},x^{k}))=0.
	\end{eqnarray}
\end{corollary}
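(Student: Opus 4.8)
The plan is to derive this statement as a direct composition of Proposition~\ref{Proposition 3.4} and Corollary~\ref{Corollary 5.2}, exploiting the fact that in the smooth regime the subdifferential $\partial_{y}\phi(x,x)$ reduces to the singleton $\{\nabla_{y}\phi(x,x)\}$. Indeed, since we are in the smooth case $\phi(x,\cdot)$ is continuously differentiable, hence locally Lipschitzian; this supplies at once the regularity on $\phi(x,\cdot)$ demanded by Corollary~\ref{Corollary 5.2}, together with the facts (recorded in the discussion preceding that corollary) that \eqref{eq1.2} holds and $\partial_{y}\phi(x,x)\neq\emptyset$ for every $x\in S$.

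First I would apply Proposition~\ref{Proposition 3.4}. Its hypotheses are precisely those assumed here: $\bar{S}\subset S$ is closed, $\{x^{k}\}\subset S$ satisfies \eqref{eq3.11}, and $\bar{S}$ is weak sharp. Its conclusion is that $\bar{S}$ is augmented weakly sharp with respect to $\{x^{k}\}$. This verifies the sole remaining hypothesis of Corollary~\ref{Corollary 5.2}, so that corollary applies and tells us that $\{x^{k}\}$ terminates finitely to $\bar{S}$ if and only if \eqref{eq5.1} holds, i.e. $0\in\liminf_{k\rightarrow\infty}P_{T_{S}(x^{k})}(-\partial_{y}\phi(x^{k},x^{k}))$.

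It then remains to translate \eqref{eq5.1} into \eqref{eq5.14} under smoothness. Because $\partial_{y}\phi(x^{k},x^{k})=\{\nabla_{y}\phi(x^{k},x^{k})\}$, the projected subdifferential $P_{T_{S}(x^{k})}(-\partial_{y}\phi(x^{k},x^{k}))$ is the singleton $\{P_{T_{S}(x^{k})}(-\nabla_{y}\phi(x^{k},x^{k}))\}$. By the definition of the lower limit of a sequence of sets, $0$ lies in $\liminf_{k}\{a^{k}\}$ for singletons $\{a^{k}\}$ exactly when $a^{k}\rightarrow 0$, since the only admissible selection from each set is $a^{k}$ itself. Hence \eqref{eq5.1} is equivalent to $\lim_{k\rightarrow\infty}P_{T_{S}(x^{k})}(-\nabla_{y}\phi(x^{k},x^{k}))=0$, which is exactly \eqref{eq5.14}, and the equivalence is established. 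I expect no substantive obstacle; the only step requiring a moment of care is this passage from the set-valued $\liminf$ condition to an ordinary limit, which rests entirely on the singleton-valuedness of the gradient in the smooth setting.
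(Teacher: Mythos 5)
Your proposal is correct and follows exactly the paper's own route: the paper derives Corollary~\ref{Corollary 5.8} precisely by combining Proposition~\ref{Proposition 3.4} (weak sharpness plus \eqref{eq3.11} yields augmented weak sharpness) with Corollary~\ref{Corollary 5.2} (using that continuous differentiability of $\phi(x,\cdot)$ implies local Lipschitz continuity, so \eqref{eq1.2} and nonemptiness of the subdifferential hold). Your explicit justification that the set-valued condition \eqref{eq5.1} collapses to the ordinary limit \eqref{eq5.14} because $\partial_{y}\phi(x^{k},x^{k})$ is a singleton in the smooth case is a detail the paper leaves implicit, and it is handled correctly.
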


By Proposition \ref{Proposition 3.5} and Corollary \ref{Corollary 5.2}, we obtain the following corollary.
\begin{corollary}\label{Corollary 5.9}
	In the $EP(\phi,S)$, suppose $\bar{S}\subset S$ is a closed set, and $\{x^{k}\}\subset S$, $\{\nabla_{y}\phi(x^{k},x^{k})\}$ is bounded and any of its accumulation $\bar{p}$ satisfies $-\bar{p}\in \text{int} G$. Then
	$\{x^{k}\}$ terminates finitely to $\bar{S}$, if and only if \eqref{eq5.14} holds.
\end{corollary}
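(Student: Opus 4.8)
The plan is to chain together Proposition \ref{Proposition 3.5} and Corollary \ref{Corollary 5.2}, and then to observe that in the present smooth setting the set-valued inner-limit condition \eqref{eq5.1} collapses to the ordinary limit \eqref{eq5.14}. So the argument splits into three short steps followed by a bookkeeping remark.

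First I would note that the hypotheses of Corollary \ref{Corollary 5.9} are exactly those of Proposition \ref{Proposition 3.5}: the set $\bar{S}$ is closed, $\{x^{k}\}\subset S$, the sequence $\{\nabla_{y}\phi(x^{k},x^{k})\}$ is bounded, and every accumulation point $\bar{p}$ satisfies $-\bar{p}\in\text{int}\,G$. Applying Proposition \ref{Proposition 3.5} therefore yields immediately that $\bar{S}$ is augmented weakly sharp with respect to $\{x^{k}\}$.

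Second, since we are in the smooth case, $\phi(x,\cdot)$ is continuously differentiable and hence locally Lipschitzian for every $x\in S$; in particular $\partial_{y}\phi(x,x)=\{\nabla_{y}\phi(x,x)\}$. Thus all hypotheses of Corollary \ref{Corollary 5.2} are satisfied, and that corollary delivers the equivalence: $\{x^{k}\}$ terminates finitely to $\bar{S}$ if and only if \eqref{eq5.1} holds. Third, I would translate \eqref{eq5.1} into \eqref{eq5.14}. Because $\partial_{y}\phi(x^{k},x^{k})$ is the single point $\nabla_{y}\phi(x^{k},x^{k})$, the projected sub-differential reduces to the singleton $C^{k}:=\{P_{T_{S}(x^{k})}(-\nabla_{y}\phi(x^{k},x^{k}))\}$. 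By the definition of the inner limit of a sequence of sets, $0\in\liminf_{k\to\infty}C^{k}$ means that there exist selections $c^{k}\in C^{k}$ with $c^{k}\to 0$; but when each $C^{k}$ is a singleton the only admissible selection is $c^{k}=P_{T_{S}(x^{k})}(-\nabla_{y}\phi(x^{k},x^{k}))$, so the condition is precisely $P_{T_{S}(x^{k})}(-\nabla_{y}\phi(x^{k},x^{k}))\to 0$, i.e.\ \eqref{eq5.14}. Combining the three steps gives the desired equivalence.

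The only point requiring genuine care is this last step: the inner-limit notation in \eqref{eq5.1} quantifies over selections from set-valued iterates, and one must invoke the singleton structure of the smooth projected sub-differential to see that the membership $0\in\liminf$ reduces to plain full-sequence convergence of the projected gradients, rather than to mere convergence along a subsequence. Everything else is an immediate citation of Proposition \ref{Proposition 3.5} and Corollary \ref{Corollary 5.2}.
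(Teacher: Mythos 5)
Your proposal is correct and follows exactly the paper's route: the paper derives Corollary \ref{Corollary 5.9} precisely by combining Proposition \ref{Proposition 3.5} (to get augmented weak sharpness) with Corollary \ref{Corollary 5.2} (smoothness implies the locally Lipschitzian hypothesis), the condition \eqref{eq5.1} collapsing to \eqref{eq5.14} because $\partial_{y}\phi(x^{k},x^{k})=\{\nabla_{y}\phi(x^{k},x^{k})\}$ is a singleton. Your explicit treatment of the singleton/inner-limit reduction is a point the paper leaves tacit, but it is the same argument.
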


\begin{remark}\label{Remark 5.4}
	In the special cases $VIP(F,S)$ of $EP(\phi,S)$, Corollary \ref{Corollary 5.9} is a generalization and improvement of [\cite{zhou2012new}, Theorem 3.3], i.e., $[T_S(x)\cap N_{\bar{S}}(x)]^\circ$ in [\cite{zhou2012new}, Theorem 3.3] is replaced with $[T_S(x)\cap \hat{N}_{\bar{S}}(x)]^\circ$, and the assumption of the continuity of $F(\cdot)$ is removed. It is worthwhile to note that [\cite{zhou2012new}, Theorem 3.3] has ever improved [\cite{xiu2005finite}, Theorem 3.2].
\end{remark}

By Proposition \ref{Proposition 3.6} and Corollary \ref{Corollary 5.2}, we have the following corollary.
\begin{corollary}\label{Corollary 5.10}
	In the $EP(\phi,S)$, suppose $\nabla_{y}\phi(x,x)$ is continuous over $S$, $\{x^{k}\}\subset S$ is bounded and any of its accumulation is strongly non-degenerate. Then $\{x^{k}\}$ terminates finitely to $\bar{S}$, if and only if the \eqref{eq5.14} holds.
\end{corollary}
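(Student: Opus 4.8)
The plan is to chain Proposition \ref{Proposition 3.6} with Corollary \ref{Corollary 5.2} and then reconcile the two limit conditions \eqref{eq5.1} and \eqref{eq5.14} in the present smooth setting. The first thing I would record is that continuity of $\nabla_{y}\phi(x,x)$ over $S$ presupposes that $\phi(x,\cdot)$ is continuously differentiable for every $x\in S$; consequently $\partial_{y}\phi(x,x)=\{\nabla_{y}\phi(x,x)\}$ is a nonempty singleton, and being $C^{1}$, $\phi(x,\cdot)$ is locally Lipschitzian on $\mathbb{R}^{n}$. In particular all the standing hypotheses needed to invoke Corollary \ref{Corollary 5.2} are in force, and (as recorded in the text preceding that corollary) the inclusion \eqref{eq1.2} holds automatically in the locally Lipschitzian case.

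Next I would apply Proposition \ref{Proposition 3.6}: since $\{x^{k}\}\subset S$ is bounded and each of its accumulation points is strongly non-degenerate, the solution set $\bar{S}$ is augmented weakly sharp with respect to $\{x^{k}\}$. This is precisely the missing ingredient required by Corollary \ref{Corollary 5.2}. Feeding this into Corollary \ref{Corollary 5.2} immediately yields the equivalence
$$
\{x^{k}\}\ \text{terminates finitely to}\ \bar{S}\quad\Longleftrightarrow\quad 0\in\liminf_{k\rightarrow\infty}P_{T_{S}(x^{k})}(-\partial_{y}\phi(x^{k},x^{k})),
$$
that is, \eqref{eq5.1}.

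It then remains to translate condition \eqref{eq5.1} into condition \eqref{eq5.14}. Because $\partial_{y}\phi(x^{k},x^{k})=\{\nabla_{y}\phi(x^{k},x^{k})\}$ is a singleton, the set $P_{T_{S}(x^{k})}(-\partial_{y}\phi(x^{k},x^{k}))$ reduces to the single point $P_{T_{S}(x^{k})}(-\nabla_{y}\phi(x^{k},x^{k}))$. Appealing to the definition of the lower limit of a sequence of sets given in Section \ref{sec2}, the origin $0$ lies in the lower limit of a sequence of singletons if and only if that sequence of points actually converges to $0$. Hence \eqref{eq5.1} is equivalent to \eqref{eq5.14}, and the stated biconditional follows.

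I expect the logical chaining to be essentially routine; the only step demanding genuine care is the last one, where the set-valued lower-limit membership $0\in\liminf_{k}P_{T_{S}(x^{k})}(-\partial_{y}\phi(x^{k},x^{k}))$ must be shown to collapse to the ordinary vector limit in \eqref{eq5.14}. This collapse relies squarely on the singleton structure of the subdifferential in the smooth case together with the precise definition of $\liminf$ of a sequence of sets, and it is the point at which the argument could be mis-stated if the singleton reduction were overlooked.
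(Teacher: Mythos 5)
Your proposal is correct and follows essentially the same route as the paper, which obtains Corollary \ref{Corollary 5.10} precisely by chaining Proposition \ref{Proposition 3.6} (to get augmented weak sharpness of $\bar{S}$ with respect to $\{x^{k}\}$) with Corollary \ref{Corollary 5.2} (smoothness implying the locally Lipschitzian hypothesis). Your explicit reduction of the set-valued condition \eqref{eq5.1} to the vector limit \eqref{eq5.14} via the singleton $\partial_{y}\phi(x^{k},x^{k})=\{\nabla_{y}\phi(x^{k},x^{k})\}$ is the same step the paper leaves implicit.
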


\begin{remark}\label{Remark 5.5}
	In the smooth programming problems $(MP)$, a special case of $EP(\phi,S)$, Corollary \ref{Corollary 5.10} is just [\cite{wang2013global}, Theorem 5.3], and the latter is an extension of [\cite{burke1988identification}, Corollary 3.5].
\end{remark}

By Theorem \ref{Theorem 5.1} and a series of its corollaries, one can see that, under normal conditions, the weak sharpness or strong non-degeneracy
of the solution set is a special case of the augmented weak sharpness
with respect to the feasible solution sequence. On the other hand,
for some algorithms in mathematical programming and variational
inequalities, for example, the proximal point algorithm, the
gradient projection algorithm and the $SQP$ algorithm and so on (see \cite{burke1988identification,calamai1987projected,polyak1987introduction,wang2005convergence,wang2013global,xiu2005finite,xiu2003some}), the projected gradient of the point sequence generated by them all converge to zero, i.e. \eqref{eq5.1} holds. Therefore, the notion of augmented weak sharpness of the solution set presented by us provides weaker sufficient conditions than the weak sharpness or strong non-degeneracy for the finite termination of these algorithms.

\section{Conclusion}
In this paper, a novel concept concerning the solution set of equilibrium problems has been introduced, namely, the augmented weak sharpness of the solution set. This concept extends the traditional notions of weak sharpness and strong non-degeneracy in relation to feasible solution sequences. We have established that the necessary and sufficient conditions for the finite termination of feasible solution sequences are met when the lower limit of the projected sub-differential sequence, associated with the feasible solution sequence, encompasses the zero point. This finding has led to the derivation of several significant corollaries. Additionally, the augmented weak sharpness of the solution set presents a sufficient condition for the finite termination of certain equilibrium problem algorithms and their specific variants, such as mathematical programming problems, variational inequality problems, Nash equilibrium problems, and global saddle point problems. This condition is less stringent than the traditional criteria of weak sharpness and strong non-degeneracy.

	\bibliographystyle{siamplain}
	\bibliography{references}

\end{document}